\let\@fnsymbol\@arabic
\theoremstyle{definition}
\newtheorem{definition}{Definition}[section]
\newtheorem{remark}[definition]{Remark}
\theoremstyle{break}
\newtheorem{lemma}[definition]{Lemma}
\newtheorem{proposition}[definition]{Proposition}
\newtheorem{theorem}[definition]{Theorem}
\newtheorem{corollary}[definition]{Corollary}
\newtheorem*{maintheorem*}{MainTheorem(cf. Theorem\til\ref{main})}
\newtheorem*{theorem*}{Theorem}
\newtheorem*{remark*}{Remark}
\def\til{~}
\def\N{\mathbb N}
\def\Z{\mathbb Z}
\def\mm{\mathfrak m}  
\def\pp{\mathfrak p}
\def\qq{\mathfrak q}
\def\aa{\mathfrak a}
\def\bb{\mathfrak b}
\def\cc{\mathfrak c}
\def\H{H} 
\def\la{\longrightarrow}
\def\:{\colon}
\def\Hom{\mathrm{Hom}}
\def\Min{\mathrm{Min}}
\def\codim{\mathrm{codim}}
\def\altezza{\mathrm{ht}}   
\def\reg{\mathrm{reg}}
\def\Ext{\mathrm{Ext}}
\def\Spec{\mathrm{Spec}}
\def\canoide{\mathrm{\mathit w}}
\def\obj{\mathrm{Ob}}
\def\lk{\mathrm{lk}}
\def\sgn{\mathrm{sgn}}
\def\se{\subseteq}
\def\iso{\cong}
\def\proj{\mathrm{Proj}}
\def\id{\mathrm{id}}
\def\cocoa{\mbox{\rm 
   C\kern-.13em o\kern-.07 em C\kern-.13em o\kern-.15em A}}
\newcommand\Exti[4]{\Ext^{#1} _{#2} ({#3}, {#4})}
\newcommand\Homi[3]{\Hom_{#1} (#2,#3)}
\newcommand\fg[0]{finitely generated{}}
\newcommand\fff[3]{\mathcal {#1}^{#2}({#3})}
\newcommand\fude[1]{\mathbf{#1}}
\begin{document}

\title{Lefschetz duality for local cohomology}
\author{Matteo Varbaro} 
\email{varbaro@dima.unige.it}
\address{Dipartimento di Matematica, Universit\'a di Genova, Italy} 
\author{Hongmiao Yu}
\email{yu@dima.unige.it}
\address{Dipartimento di Matematica, Universit\'a di Genova, Italy}
 \thanks{Both authors are supported by PRIN  2020355BBY ``Squarefree Gr\"obner degenerations, special varieties and related topics".} 
  \date{}
\maketitle

\begin{abstract}
Since the 1974 paper by Peskine and Szpiro, liaison theory via complete intersections, and more generally via Gorenstein varieties, has become a standard tool kit in commutative algebra and algebraic geometry, allowing to compare algebraic features of linked varieties. In this paper we develop a liaison theory via quasi-Gorenstein varieties, a much broader class than Gorenstein varieties. As applications, we derive a connectedness property of quasi-Gorenstein subspace arrangements generalizing previous results by Benedetti and the first author, and we deduce the classical topological Lefschetz duality via the Stanley-Reisner correspondence.
\end{abstract}

\section{Introduction}

The notion of a liaison between (coordinate rings of) algebraic varieties was introduced about fifty years ago from Peskine and Szpiro in \cite{PS}, even if already Max Noether in 1882 was using related concepts \cite{MaxN}. The theory of liaison (or linkage) became later popular with the work of Rao \cite{Rao}, who analysed the relationship between the deficiency modules of two projective curves in $\mathbb{P}^3$ whose union is (or more precisely linked by) an intersection of two surfaces. The basic idea is indeed that linked varieties, though they may be quite disparate, do share many features in common. The classical theory concerns varieties linked via complete intersections, but it was later extended by Schenzel \cite{schenzel} to linkage via the larger class of Gorenstein varieties, which play a key role in duality theory (see also the book of \cite{Mig}). In this paper, we develop a liaison theory via quasi-Gorenstein varieties, which is a much broader class of Gorenstein varieties: for example for any projective variety $X$ with very ample canonical (resp. anti-canonical) bundle, the canonical (resp. anti-canonical) ring of sections is quasi-Gorenstein, but not necessarily Gorenstein (e.g. if $H^i(X,\mathcal{O}_X)\neq 0$ for some $0<i<\dim(X)$); or, the coordinate ring of a projective variety with trivial canonical bundle, whenever embedded in a projective space in a projectively normal way, is quasi-Gorenstein, but not necessarily Gorenstein; or still, the Stanley-Reisner ring of any triangulated homology manifold is quasi-Gorenstein, while only triangulated homology spheres yield Gorenstein rings. Later we show two applications of this theory:
\begin{enumerate}
\item In Section \ref{sec:conn} we study the connectedness properties of projective varieties with a quasi-Gorenstein coordinate ring: in particular, in Theorem \ref{t:conn} we extend the main result of \cite{BV} to quasi-Gorenstein subspace arrangements.
\item In Section \ref{s:Lefschetz} we show how the liaison theory for quasi-Gorenstein varieties can be used to get, via the Stanley-Reisner correspondence, the topological Lefschetz duality.
\end{enumerate}

Precisely, in Section \ref{s:liaison} liaison theory via quasi-Gorenstein rings is developed. Throughout we work with $\N$-graded rings $R$ where the degree zero part is not necessarily a field: in this way we treat at the same time the local theory and the theory for projective varieties over a field. The main result is Theorem \ref{main}, linking in a long exact sequence of graded modules the local cohomology modules of three rings $R/\aa,R/\bb,R/\cc$, such that $R/\cc$  is quasi-Gorenstein, $\aa$ and $\bb$ are linked via $\cc$ and $R/\aa$ is  generalized Cohen-Macaulay. On the way we prove also some structure theorems of quasi-Gorenstein rings, like a Poincar\'e duality for their local cohomology modules (Proposition \ref{cPoincare}) or the fact that for $F$-injective generalized Cohen-Macaulay positively graded algebras $A$ over a field of positive characteristic with $H_{\mm}^{\dim A}(A)_0=0$, Gorenstein and quasi-Gorenstein are equivalent notions (Corollary \ref{qGorsing}). We also prove Corollary \ref{serre}, connecting the Serre properties $(S_{\ell})$ with properties of the local cohomology modules of $R/\bb$ and of $R/\cc$ (independently on the fact that $R/\aa$ is generalized Cohen-Macaulay).

In Section \ref{sec:conn} we derive some applications regarding the connectedness properties of quasi-Gorenstein rings: Theorem \ref{t:connectedness} provides an intriguing result on the connectedness of the dual graph of a reduced quasi-Gorenstein ring with Cohen-Macaulay minimal prime ideals. With more restrictive assumptions we get a more precise conclusion: Theorem \ref{t:conn} shows a strong connectedness feature of quasi-Gorenstein subspace arrangements, generalizing the main result of \cite{BV} about Gorenstein line arrangements.

In Section \ref{s:Lefschetz} we apply our previous results to Stanley-Reisner rings. Theorem \ref{t:qGpseudo} shows that quasi-Gorenstein Stanley-Reisner rings with canonical module generated in degree 0 correspond to normal pseudo-manifolds with non-zero top homology. It was essentially proved by Gr\"abe in \cite{Grabe} that triangulations of orientable homology manifolds that are not homology spheres yield quasi-Gorenstein Stanley-Reisner rings; in Corollary \ref{c:manifold} we establish the converse for Buchsbaum complexes. Finally, in Theorem \ref{Lefschetz duality} we derive the classical topological Lefschetz duality for triangulated homology manifolds (see also the discussion below and Lemma \ref{l:homotopy}).

\bigskip

{\bf Acknowledgments}: Both authors wish to thank Srikanth Iyengar for useful discussions on topics related to the ones discussed in this paper. Also, thanks to the anonymous referee for thoroughly reading the paper and making helpful suggestions to improve the exposition.

\section{Liaison by quasi-Gorenstein ideals}\label{s:liaison}

\noindent {\bf Notation.} Let $R=\oplus_{i\in\N}R_i$ be a graded $n$-dimensional 
Gorenstein ring with a unique homogeneous maximal ideal, say $\mm$. Notice that $R_0$ is necessarily a local ring with maximal ideal $\mm_0=\mm\cap R_0$; let us call $k=R_0/\mm_0$ the residue field of $R_0$, and $E$ the injective hull $E_{R_0}(k)$ (hence $E$ is a graded $R$-module concentrated in degree 0). We will assume that $R_0$ is complete, and for a graded $R$-module $M$ we denote by $M^\vee=\oplus_{i\in\Z}(M^\vee)_i$ the {\it graded Matlis dual} of $M$, namely
\[(M^\vee)_i=\Hom_{R_0}(M_{-i},E).\]
Notice that, even if $M$ is a finitely generated graded $R$-module, in general $M^\vee$ is different from $\Hom_{R_0}(M,E)$, since $M$ may not be finitely generated as an $R_0$-module.

Since $R$ is Gorenstein, there exists an integer $r$ such that $R(r)\iso\canoide_R$ is a graded canonical module. In the special case that $R_0$ is a field, the integer $r$ is called the {\it $a$-invariant} of $R$.
\begin{definition}
Let $\aa$, $\bb$, $\cc$ be homogeneous ideals of $R$. We say that {\it $\aa$ and $\bb$ are linked via the quasi-Gorenstein ideal $\cc$} if:
\begin{enumerate}
\item $\aa$ and $\bb$ are unmixed ideals of height $g$ with $\cc\se\aa\cap\bb$.
\item $\bb=\cc:_R\aa$ and $\aa=\cc:_R\bb$ (hence $\bb/\cc\iso\Hom_R(R/\aa, R/\cc)$ and $\aa/\cc\iso\Hom_R(R/\bb, R/\cc)$).
\item There exists $c\in \Z$ such that $(R/\cc)(c)\iso\Ext^g_R(R/\cc,R(r))$.
\end{enumerate}
\end{definition}

The third condition means that $R/\cc$ is a {\it quasi-Gorenstein ring}. Notice that $R/\cc$ must satisfy Serre's condition $(S_2)$: in particular $R/\cc$ will be Gorenstein as soon it has dimension $\leq 2$. For simplicity and without loss of generality, we will assume throughout that $n-g=d=\dim(R/\cc)\geq 2$. Finally, for each graded $R$-module, we write
$\canoide_{M}$ for $\Exti{n-\dim(M)}R{M}{R(r)}$. 

\begin{remark}
For a homogeneous ideal $I\subset R$ such that $A=R/I$ is Cohen-Macaulay, the $A$-module $\canoide_{A}$ is called in \cite{CMrings} {\it graded canonical module} of $A$. In this case $\omega_A$ is a {\it dualizing complex} for $A$. Often in the literature the $A$-module $\canoide_{A}$ is called canonical module even if $A$ is not Cohen-Macaulay, however in general $\canoide_{A}$ may not be a dualizing complex for $A$.

In our setting $R/\cc$ is Cohen-Macaulay precisely when $R/\cc$ is Gorenstein, and the proofs of the results of Schenzel in \cite{schenzel} that we are going to generalize rely on the fact that $R/\cc(c)$ is a dualizing complex for $R/\cc$, a property that fails in our setting: even if $R/\cc$ admits a dualizing complex under our assumptions, this is not useful to our purposes, so we must change the proof-strategy used in \cite{schenzel}.
\end{remark}

\begin{remark}
It is possible that $R$ is concentrated in degree 0, namely $R_i=0$ for all $i\neq 0$. In this case the words ``graded'' and ``homogeneous''  are vacuous assumptions: homogeneous ideals are just ideals, graded $R$-modules are just $R$-modules, graded maps of graded $R$-modules are just maps of $R$-modules.

A somehow opposite case is when $R_0=k$: In this case $E=k$ and $M^\vee$ is the graded dual as a graded $k$-vector space.
\end{remark}

\begin{lemma}\label{exactseq}
With the above assumptions, we have graded isomorphisms
\[\canoide_{R/\aa}\iso{(\bb/\cc)}(c), \ \ \ \canoide_{R/\bb}\iso{(\aa/\cc)}(c).\]
Furthermore we have a long exact sequence of graded $R$-modules
\begin{eqnarray*}
\cdots \la\H^{d-2}_\mm(\canoide_{R/\aa})\la\H^{d-2}_\mm(R/\cc)(c)\la\H^{d-2}_\mm(R/\bb)(c)\la\H^{d-1}_\mm(\canoide_{R/\aa})\\
\la \H^{d-1}_\mm(R/\cc)(c)\la\H^{d-1}_\mm(R/\bb)(c)\la\H^{d}_\mm(\canoide_{R/\aa})\la (R/\aa)^\vee\la0
\end{eqnarray*}
\end{lemma}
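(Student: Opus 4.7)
The plan is to first identify $\canoide_{R/\aa}$ with $(\bb/\cc)(c)$ through a change-of-rings argument, and then to derive the long exact sequence by applying local cohomology to the short exact sequence $0\to\bb/\cc\to R/\cc\to R/\bb\to 0$ (shifted by $c$), modifying only the last nonzero term.

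For the canonical module formulas, I would invoke the Grothendieck change-of-rings spectral sequence
\[E_2^{p,q}=\Ext^p_{R/\cc}(M,\Ext^q_R(R/\cc,R(r)))\Longrightarrow\Ext^{p+q}_R(M,R(r)),\]
valid for any $R/\cc$-module $M$ and arising from the adjunction $\Hom_R(M,-)\iso\Hom_{R/\cc}(M,\Hom_R(R/\cc,-))$. Since $\cc$ has grade $g$ in the Gorenstein ring $R$ all columns with $q<g$ vanish, while the quasi-Gorenstein hypothesis identifies $\Ext^g_R(R/\cc,R(r))$ with $(R/\cc)(c)$; a direct check of the differentials shows the spectral sequence degenerates in total degree $g$, yielding the natural isomorphism $\canoide_M\iso\Hom_R(M,R/\cc)(c)$. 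Applying this with $M=R/\aa$ and invoking condition~(2) gives $\canoide_{R/\aa}\iso((\cc:_R\aa)/\cc)(c)=(\bb/\cc)(c)$, and symmetrically $\canoide_{R/\bb}\iso(\aa/\cc)(c)$.

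For the long exact sequence I would apply $\H^*_\mm$ to
\[0\to\canoide_{R/\aa}\to(R/\cc)(c)\to(R/\bb)(c)\to 0.\]
This immediately produces the asserted sequence in degrees $\le d-1$, together with a tail
\[\cdots\to\H^{d-1}_\mm(R/\bb)(c)\to\H^d_\mm(\canoide_{R/\aa})\xrightarrow{\phi}\H^d_\mm(R/\cc)(c)\to\H^d_\mm(R/\bb)(c)\to 0,\]
so it suffices to show that $\Im\phi$ coincides with $(R/\aa)^\vee$ viewed inside $\H^d_\mm(R/\cc)(c)\iso(R/\cc)^\vee$, the latter identification being graded local duality.

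The main obstacle is precisely this image identification. By graded local duality for the Gorenstein ring $R$, the map $\phi$ is the Matlis dual of the map $\canoide_{\canoide_{R/\cc}}\to\canoide_{\canoide_{R/\aa}}$ functorially induced by the inclusion $\canoide_{R/\aa}\hookrightarrow\canoide_{R/\cc}$, equivalently by the surjection $f\colon R/\cc\twoheadrightarrow R/\aa$. Since $R/\cc$ is quasi-Gorenstein, hence satisfies $(S_2)$, the biduality map $\eta_{R/\cc}\colon R/\cc\to\canoide_{\canoide_{R/\cc}}$ is an isomorphism; naturality of $\eta$ then forces the composite $R/\cc\xrightarrow{\eta_{R/\cc}}\canoide_{\canoide_{R/\cc}}\to\canoide_{\canoide_{R/\aa}}$ to agree with $R/\cc\xrightarrow{f}R/\aa\xrightarrow{\eta_{R/\aa}}\canoide_{\canoide_{R/\aa}}$. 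Because $\aa$ is unmixed with $g<n$ the map $\eta_{R/\aa}$ is injective, so this composite has image exactly $R/\aa$. Matlis duality converts surjections into injections (and vice versa), giving $\Im\phi=(R/\aa)^\vee\hookrightarrow(R/\cc)^\vee$, which is exactly what we needed.
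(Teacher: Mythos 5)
Your proposal is correct, and it differs from the paper's argument in both halves in ways worth recording. For the isomorphism $\canoide_{R/\aa}\iso(\bb/\cc)(c)$ the paper argues by induction on $g=\altezza(\cc)$, cutting down by a homogeneous $R$-regular element $x\in\cc$ and invoking \cite[Lemma 3.1.16]{CMrings} at each step, the base case $g=0$ being exactly the Hom-adjunction your spectral sequence is built from. Your change-of-rings spectral sequence compresses that induction into a single computation: since $\Ext^q_R(R/\cc,R(r))=0$ for $q<g=\grade(\cc)$ and equals $(R/\cc)(c)$ for $q=g$, the only surviving term in total degree $g$ is $E_2^{0,g}$, and the edge map gives $\Ext^g_R(M,R(r))\iso\Hom_{R/\cc}(M,R/\cc)(c)$ naturally in $M$ --- this is a legitimate alternative whose bonus is that naturality comes for free, which you then need in the second half. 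For the tail of the long exact sequence, the paper takes the shorter route of Matlis-dualizing the surjection $\H^d_\mm(R/\cc)(c)\to\H^d_\mm(R/\bb)(c)$, recognizing the dual as the inclusion $\aa/\cc\hookrightarrow R/\cc$ (via the companion isomorphism $\canoide_{R/\bb}\iso(\aa/\cc)(c)$), reading off that its cokernel is $R/\aa$, and dualizing back; you instead dualize the connecting map $\phi$ and use naturality of the biduality map $\eta$, bijectivity of $\eta_{R/\cc}$ (from $(S_2)$) and injectivity of $\eta_{R/\aa}$ (from unmixedness). Both mechanisms are sound and rest ultimately on the same exact sequence $0\to\aa/\cc\to R/\cc\to R/\aa\to 0$; yours is slightly heavier but identifies $\Im\phi$ directly. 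If you write this up, the only steps to make explicit are the two standard facts you lean on: that the edge isomorphism is compatible with the map induced by $R/\cc\twoheadrightarrow R/\aa$, so that the inclusion $\canoide_{R/\aa}\hookrightarrow\canoide_{R/\cc}$ really is $(\bb/\cc)(c)\hookrightarrow(R/\cc)(c)$; and the criterion that $\eta_M$ is injective exactly when $M$ has no associated primes of non-maximal dimension (see \cite{schenzel2}). Neither is a gap, only a citation to supply.
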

\begin{proof}
We are going to show that $\canoide_{R/\aa}\iso{(\bb/\cc)}(c)$, the other isomorphism is obtained symmetrically. By the assumptions, we have 
\[\bb/\cc\iso\Homi {R}{R/\aa}{R/\cc}\iso\Homi {R/\cc}{R/\aa}{R/\cc}.\] 
We want to show by induction on $g=\altezza(\cc)$ that 
$$\Exti gR{R/\aa}{\canoide_R}\iso\Homi {R/\cc}{R/\aa}{\canoide_{R/\cc}}.$$
\begin{enumerate}
\item[$g=0:$] Since $\cc\se\aa$ and $R/\cc(c)\iso\Ext^g_R(R/\cc,R(r))=\Homi{R}{R/\cc}{R(r)}$, we have 
\begin{eqnarray*}
\Homi {R/\cc}{R/\aa}{\canoide_{R/\cc}}&\iso&\Homi {R/\cc}{R/\aa}{R/\cc(c)}\\
&\iso&\Homi {R/\cc}{R/\aa}{\Homi{R}{R/\cc}{R(r)}}\\
&\iso&\Homi{R}{R/\aa}{R(r)},
\end{eqnarray*}
where the third isomorphism follows because $\cc\subseteq \aa$.
\item[$g>0:$] 
Because $\mathrm{grade}(\cc)=g>0$, the ideal $\cc$ contains an $R$-regular element. We can choose $x\in\cc$ such an $R$-regular element homogeneous of degree $s$. We set $R'=R/(x)$, $\cc'=\cc/(x)$ and $\aa'=\aa/(x)$. Then $R'$ is a graded Gorenstein ring with graded canonical module $\canoide_{R'}\iso R'(r+s)$ (\cite[Corollary 3.6.14]{CMrings}) and $R'/\cc'(c+s)= R/\cc (c+s)\iso \Ext^g_R(R/\cc,R(r))(s)\iso \Ext^{g-1}_{R'}(R'/\cc',R'(r+s))$, where the last isomorphism follows by \cite[Lemma 3.1.16]{CMrings}.
By  the induction hypothesis, using once again \cite[Lemma 3.1.16]{CMrings}, we have:
\begin{eqnarray*}
\Exti gR{R/\aa}{\canoide_R}&\iso&\Exti gR{R/\aa}{R(r)}\\
&\iso&\Exti {g-1}{R'}{R'/\aa'}{R'(r+s)}(-s)\\
&\iso&\Exti {g-1}{R'}{R'/\aa'}{\canoide_{R'}}(-s)\\
&\iso&\Homi {R'/\cc'}{R'/\aa'}{\canoide_{R'/\cc'}}(-s)\\
&\iso&\Homi {R'/\cc'}{R'/\aa'}{R'/\cc'(c+s)}(-s)\\
&\iso&\Homi {R/\cc}{R/\aa}{R/\cc}(c).
\end{eqnarray*}
\end{enumerate}
The existence of the long exact sequence now easily follows: we have a short exact sequence
\[0\la \canoide_{R/\aa}\iso{(\bb/\cc)}(c)\la R/\cc(c)\la R/\bb(c)\la 0.\]
From this we get the long exact sequence of local cohomology modules, that is almost the desired one. The only difference stands in the last piece, that is
\[\cdots\la\H^{d}_\mm(\canoide_{R/\aa})\la\H^{d}_\mm(R/\cc)(c)\la\H^{d}_\mm(R/\bb)(c)\la 0.\]
The point is that, using the local duality theorem for graded $R$-modules (\cite[Theorem 3.6.19]{CMrings}), taking the Matlis dual of the surjection $\H^{d}_\mm(R/\cc)(c)\to\H^{d}_\mm(R/\bb)(c)\to 0$, we get the injection $0\to \canoide_{R/\bb}(-c)\cong \aa/\cc\to R/\cc$, whose cokernel is isomorphic to $R/\aa$. Therefore the kernel of the above surjection is Matlis dual to $R/\aa$, and this lets us conclude.
\end{proof}

\begin{corollary}\label{serre}
With our notation, we have that $R/\aa$ always satisfies $(S_1)$. Furthermore, for any natural number $\ell\geq 2$, the following are equivalent:
\begin{enumerate}
\item $R/\aa$ satisfies Serre's condition $(S_{\ell})$.
\item The natural map $\H^i_\mm(R/\cc)\la\H^i_\mm(R/\bb)$ is an isomorphism for $d-\ell+1< i <d$ and surjective for $i=d-\ell+1$.
\end{enumerate}
\end{corollary}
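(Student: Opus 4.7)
The plan is to combine Lemma~\ref{exactseq} with the classical characterization of Serre's condition $(S_\ell)$ in terms of local cohomology of the canonical module.

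First, since by hypothesis~(1) of the setup $\aa$ is unmixed of height $g$, all associated primes of $R/\aa$ have dimension $d$, so $R/\aa$ has no embedded primes; this is precisely $(S_1)$.

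For the equivalence when $\ell\geq 2$, I would invoke the following classical characterization (going back essentially to Schenzel, derivable from the biduality spectral sequence; cf.\ the discussion of local duality in \cite{CMrings}): for a finitely generated unmixed module $M$ of dimension $d$ over the Gorenstein ring $R$, $M$ satisfies $(S_\ell)$ if and only if
\[ \H^i_\mm(\canoide_M)=0 \ \text{for every}\ d-\ell+1<i<d, \qquad \text{and} \qquad \H^d_\mm(\canoide_M)\cong M^\vee. \]
The second condition is the Matlis dual of the biduality isomorphism $M\cong \canoide_{\canoide_M}$; for $\ell=2$ the first condition is vacuous, recovering the standard characterization of $(S_2)$.

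It then suffices to translate these two conditions, applied to $M=R/\aa$, into condition~(2) via a direct exactness chase in the long exact sequence of Lemma~\ref{exactseq}. Labelling the maps
\[ \cdots \la \H^{i-1}_\mm(R/\bb)(c)\xrightarrow{\delta_{i-1}}\H^{i}_\mm(\canoide_{R/\aa})\xrightarrow{\beta_i}\H^{i}_\mm(R/\cc)(c)\xrightarrow{\gamma_i}\H^{i}_\mm(R/\bb)(c)\xrightarrow{\delta_{i}}\H^{i+1}_\mm(\canoide_{R/\aa})\la \cdots, \]
exactness yields the dictionary: $\gamma_i$ is surjective iff $\delta_i=0$; $\gamma_i$ is injective iff $\beta_i=0$, iff $\delta_{i-1}$ is surjective; consequently $\H^{i+1}_\mm(\canoide_{R/\aa})=0$ iff both $\gamma_i$ is surjective and $\gamma_{i+1}$ is injective. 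Moreover, the tail $\H^{d-1}_\mm(R/\bb)(c)\xrightarrow{\delta_{d-1}}\H^d_\mm(\canoide_{R/\aa})\to (R/\aa)^\vee\to 0$ shows that $\H^d_\mm(\canoide_{R/\aa})\to (R/\aa)^\vee$ is an isomorphism iff $\gamma_{d-1}$ is surjective. With this dictionary, the pair of conditions in the characterization matches~(2) exactly; mild care is required at the boundary index $i=d-\ell+1$ (where condition~(2) requires only surjectivity, which is supplied by the vanishing of $\H^{d-\ell+2}_\mm(\canoide_{R/\aa})$ when $\ell\geq 3$, or directly by the top isomorphism when $\ell=2$) and at the top index $i=d-1$ (where surjectivity of $\gamma_{d-1}$ is precisely the top isomorphism). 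The main obstacle is the canonical-module characterization of $(S_\ell)$; once that is granted, the rest of the argument is a careful but routine exactness chase.
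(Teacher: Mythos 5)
Your proposal is correct and follows essentially the same route as the paper: the paper also reduces $(S_\ell)$ to the two conditions (biduality $R/\aa\cong\canoide_{\canoide_{R/\aa}}$, translated via local duality into $\H^d_\mm(\canoide_{R/\aa})\to(R/\aa)^\vee$ being an isomorphism, plus vanishing of $\H^i_\mm(\canoide_{R/\aa})$ for $d-\ell+2\le i<d$) by citing Schenzel's characterization, and then performs exactly your exactness chase in the long exact sequence of Lemma~\ref{exactseq}. The one ingredient you flag as "granted" is supplied in the paper by the citation \cite[Theorem 1.14]{schenzel2}.
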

\begin{proof}
By definition $R/\aa$ has no embedded primes, so it satisfies $(S_1)$. When $\ell\geq 2$, by \cite[Theorem 1.14]{schenzel2} $R/\aa$ satisfies $(S_{\ell})$ if and only if the following two conditions hold:
\begin{itemize}
\item[(a)] The natural map $R/\aa\to \canoide_{\canoide_{R/\aa}}$ is an isomorphism.
\item[(b)] $\H^i_\mm(\canoide_{R/\aa})=0$ for $d-\ell+2\leq i<d$.
\end{itemize}
By local duality theorem for graded $R$-modules (\cite[Theorem 3.6.19]{CMrings}) condition (a) is equivalent to the fact the natural map $\H^{d}_\mm(\canoide_{R/\aa})\la (R/\aa)^\vee$ is an isomorphism. By Lemma \ref{exactseq}, it is surjective in any case (which is indeed equivalent to the fact that $R/\aa$ satisfies $(S_1)$), and it is injective if and only if the natural map $\H^{d-1}_\mm(R/\cc)\la\H^{d-1}_\mm(R/\bb)$ is surjective. Again using Lemma \ref{exactseq}, fixed $2\leq i<d$, the vanishing of $\H^i_\mm(\canoide_{R/\aa})$ is equivalent to the fact that the natural map $\H^i_\mm(R/\cc)\la\H^i_\mm(R/\bb)$ is injective and the natural map $\H^{i-1}_\mm(R/\cc)\la\H^{i-1}_\mm(R/\bb)$ is surjective. Hence conditions (a) and (b) together are equivalent to (2), so we conclude.
\end{proof}

\begin{remark}
The above corollary generalizes \cite[Theorem 4.1]{schenzel}, because if $R/\cc$ is Gorenstein then $\H^i_\mm(R/\cc)=0$ for all $i<d$, so condition (2) is equivalent to say that $\H^i_\mm(R/\bb)=0$ for all $d-\ell+1<i<d$.
\end{remark}

The following remark is well-known, however we give a proof for the convenience of the reader.

\begin{remark}\label{torsionfunc}
There is a natural transformation
$$\eta: \fude R\Gamma_\mm\la \id_{\fff D{}R}$$ on the derived category $\fff D{}R$.
Denoting the full subcategory of $\fff D{}R$ of bounded below complexes by $\fff D{+}R$, if  $X^\bullet\in\obj(\fff D{+}R)$ is such that $\H^i(X^\bullet)$  is an $R$-module of finite length for each $i\in\Z$, then the natural morphism $\eta(X^\bullet): \fude R\Gamma_\mm(X^\bullet)\la X^\bullet$ is an isomorphism in the derived category $\fff D{}R$.
\proof
Since for each $R$-module $N$ there is a natural inclusion $\Gamma_\mm(N)\se N$, there is a natural transformation $\eta':\Gamma_\mm\la  \id_{\fff K{}R} $ on the homotopy category $\fff K{}R$.
Since the category of $R$-modules has enough injectives, each complex $Y^\bullet$ in $\fff D{}R$ admits a quasi-isomorphism to a complex of injective $R$-modules $I_Y^\bullet$ by \cite[I. Corollary $5.3.\gamma$ and Lemma $4.6(2)$]{RaD}. 
Hence $\eta=(\eta_Y)_{Y^\bullet\in\obj(\fff D{}R)}:  \fude R\Gamma_\mm \la \id_{\fff D{}R}$ with $\eta_Y=Q(\eta'_{I_Y})$ is a natural transformation, where $Q: \fff K{}R\la \fff D{}R$ is the localization functor.\\
If $M$ is an $R$-module of finite length, then we have
\begin{eqnarray*} \H^{i}(\fude R\Gamma_\mm(M))\iso\H^i_\mm(M)=
\begin{cases}
M&\text{  if \ } i=0,\\
0&\text{ if \ } i \not=0.
\end{cases}
\end{eqnarray*}
Hence $\eta_M:  \fude R\Gamma_\mm(M)\la M$ is an isomorphism.
Since the category of $R$-modules of finite length is a thick subcategory of the category of $R$-modules and since the functors  $\fude R\Gamma_\mm$, $\id_{\fff D{+}R}$ are way-out right,
by the Lemma on Way-out Functors \cite[Proposition $7.1$ ($ii$)]{RaD} we have that $\eta(X^\bullet): \fude R\Gamma_\mm(X^\bullet)\la X^\bullet$ is an isomorphism for each  complex $X^\bullet\in\obj(\fff D{+}R)$ such that $\H^i(X^\bullet)$  is an $R$-module of finite length for each $i\in\Z$.
\endproof
\end{remark}

We say that a \fg\ graded $R$-module $M$ is {\it generalized Cohen-Macaulay} if the $R$-module $\H^i_\mm(M)$ has finite length for all $i=0,\ldots ,\dim(M)-1$. This is equivalent to ask that $M_\pp$ is Cohen-Macaulay for every homogeneous prime ideal $\pp\neq \mm$. 

\smallskip

The following result already appeared without a formal proof in \cite[Korollar 1.4]{schenzel3}. For the reader's convenience we restate it here giving a complete proof.

\begin{lemma}\label{gr:Schenzel}
Let $M$ be a graded generalized Cohen-Macaulay finitely generated $R$-module of dimension $d$. Then there are isomorphisms of graded $R$-modules
\[H_\mm^{d-i}(M)^\vee\iso\H_\mm^{i+1}(\canoide_M)\]
for all $0<i<d-1$. Furthermore, if $d\geq 2$, there is the short exact sequence of graded $R$-modules:
\[0\la \H_\mm^1(M)^\vee\la H_\mm^d(\canoide_M)\la M^\vee\la \H_\mm^0(M)^\vee\la 0\]
\end{lemma}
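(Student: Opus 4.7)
The plan is to apply a truncation triangle to the complex $X := \fude R\Hom^\bullet(M, R(r))$ and extract the statement from the resulting long exact sequence. By graded local duality \cite[Theorem 3.6.19]{CMrings}, for every $j \in \Z$ one has $H^j(X) = \Ext^j_R(M, R(r)) \iso H^{n-j}_\mm(M)^\vee$, so $X$ has cohomology concentrated in degrees $[n-d,\,n]$, with $H^{n-d}(X) \iso \canoide_M$, and by the generalized Cohen--Macaulay hypothesis $H^j(X)$ has finite length for $n-d < j \leq n$.

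Set $Y := \tau^{>n-d}X$; every cohomology module of $Y$ has finite length. From the canonical triangle $\canoide_M[-(n-d)] \la X \la Y \la [+1]$, applying $\fude R\Gamma_\mm$ and invoking Remark \ref{torsionfunc} to identify $\fude R\Gamma_\mm(Y) \simeq Y$, it only remains to compute $\fude R\Gamma_\mm(X)$. Since $M$ is finitely generated, one has the natural isomorphism
\[ \fude R\Gamma_\mm \bigl(\fude R\Hom^\bullet(M, R(r))\bigr) \simeq \fude R\Hom^\bullet\bigl(M,\, \fude R\Gamma_\mm(R(r))\bigr). \]
Since $R$ is Gorenstein of dimension $n$, $\fude R\Gamma_\mm(R(r)) \simeq H^n_\mm(R(r))[-n]$, and applying graded local duality to $R$ itself gives $H^n_\mm(R(r)) \iso R^\vee$, a graded-injective $R$-module. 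Hence $\fude R\Hom^\bullet(M, R^\vee) \simeq \Hom_R(M, R^\vee) \iso M^\vee$ by the Hom-tensor adjunction, yielding $\fude R\Gamma_\mm(X) \simeq M^\vee[-n]$.

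Feeding this into the triangle and taking cohomology, with the reindexing $i := k - (n-d)$, produces the long exact sequence
\[ \cdots \la H^{i}_\mm(\canoide_M) \la H^{i+n-d}\bigl(M^\vee[-n]\bigr) \la H^{d-i}_\mm(M)^\vee \la H^{i+1}_\mm(\canoide_M) \la \cdots \]
where the middle term vanishes for $i \neq d$ (being $M^\vee$ for $i=d$) and the third term vanishes for $i \leq 0$. For $0 < i < d-1$ both adjacent middle terms vanish, giving the isomorphisms $H^{d-i}_\mm(M)^\vee \iso H^{i+1}_\mm(\canoide_M)$. Splicing the exact segments at $i = d-1$ and $i = d$, and using $H^{d+1}_\mm(\canoide_M) = 0$, produces the four-term exact sequence at the end.

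The main technical obstacle is the identification $\fude R\Gamma_\mm(X) \simeq M^\vee[-n]$: this is a form of derived graded local duality requiring both the natural commutation of $\fude R\Gamma_\mm$ with $\fude R\Hom^\bullet(M, -)$ for finitely generated $M$ and careful bookkeeping of graded shifts when deriving $H^n_\mm(R(r)) \iso R^\vee$ from local duality applied to the Gorenstein ring $R$.
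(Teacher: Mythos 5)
Your proof is correct, and its skeleton is the same as the paper's: both arguments hinge on the truncation triangle $\canoide_M[d-n]\la \fude R\Hom^\bullet(M,R(r))\la J^\bullet\la{}$ (your $Y=\tau^{>n-d}X$ is exactly the paper's truncated dualizing complex $J^\bullet$) and on the observation that, since $M$ is generalized Cohen--Macaulay, $J^\bullet$ has finite-length cohomology and hence coincides with $\fude R\Gamma_\mm(J^\bullet)$ via Remark~\ref{torsionfunc}. Where you diverge is in how the long exact sequence is extracted. The paper applies $\fude R\Hom^\bullet(-,R(r))$ to the triangle, obtaining a sequence in the modules $\Ext^i_R(J^\bullet,R(r))$ and $\Ext^{n-d+i}_R(\canoide_M,R(r))$, and then translates each term back via two applications of graded local duality (one through the derived local duality for $J^\bullet$, one through the classical statement for $\canoide_M$). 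You instead apply $\fude R\Gamma_\mm$ to the triangle itself and compute the middle term directly as $\fude R\Gamma_\mm(X)\simeq \fude R\Hom^\bullet(M,\fude R\Gamma_\mm(R(r)))\simeq M^\vee[-n]$; this avoids the double dualization and makes the appearance of $M^\vee$ in the four-term sequence completely transparent. The price is that you must justify the commutation $\fude R\Gamma_\mm\circ\fude R\Hom^\bullet(M,-)\simeq \fude R\Hom^\bullet(M,\fude R\Gamma_\mm(-))$ for finitely generated $M$ (standard: replace $M$ by a resolution by finite free modules and realize $\fude R\Gamma_\mm$ by tensoring with the \v{C}ech complex, or note that $\Gamma_\mm\Hom_R(M,I)=\Hom_R(M,\Gamma_\mm(I))$ with $\Gamma_\mm(I)$ injective for $I$ injective), together with $H^n_\mm(R(r))\iso R^\vee$ being graded-injective so that the adjunction $\Hom_R(M,R^\vee)\iso M^\vee$ applies; you correctly flag this as the technical crux. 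One small presentational slip: in your long exact sequence the term you label $\H_\mm^{d-i}(M)^\vee$ is really $H^{i+n-d}(Y)$, which vanishes for $i\leq 0$ by the truncation but is only identified with $\H_\mm^{d-i}(M)^\vee$ for $i\geq 1$; as written it sounds as if $\H^d_\mm(M)^\vee$ vanished at $i=0$. Since you use the sequence only for $i\geq 1$ and for the terminal segment, nothing is affected.
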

\begin{proof}
Let $J^\bullet$ be an object in the bounded derived category ${\bf D}^b(R)$ such that there is an exact triangle
\[\canoide_M[d-n]\la \fude R\Hom^\bullet(M,R(r))\la J^\bullet\la \canoide_M[d-n+1].\]
Such a $J^\bullet$ is called the {\it truncated dualizing complex} of $M$.  We observe that $J^\bullet$ has the following properties:
\begin{enumerate}
\item
We have isomorphisms of graded $R$-modules
\[\H^{i}(J^\bullet)\cong H^i(\fude R\Hom^\bullet(M,R(r)))\]
for all $i\neq n-d$. Hence by the local duality theorem for graded $R$-modules (\cite[Theorem 3.6.19]{CMrings}) we have graded isomorphisms
\[\H_\mm^i(M)\iso\H^{n-i}(J^\bullet)^\vee\]
for all $i\not=d$.
\item
Since $M$ is generalized Cohen-Macaulay,  $\H^{n-i}(J^\bullet)^\vee$ has finite length for all $i\in Z$ by property ($1$) and by the fact $\H^{n-d}(J^\bullet)=0$. So $\H^{n-i}(J^\bullet)$ has finite length for all $i\in Z$. Hence    $J^\bullet\iso\fude R\Gamma_\mm(J^\bullet)$ in the derived category $\fff D{}R$ by Remark\til\ref{torsionfunc}.
\end{enumerate}
Since \begin{eqnarray*} \H^{n-i}(J^\bullet)\iso
\begin{cases}
\Ext_R^{n-i}(M, \canoide_R)&\text{ if \ } i \neq d,\\
0&\text{  if \ } i=d,
\end{cases}
\end{eqnarray*}
is a \fg\ $R$-module for each $i\in\Z$ and $R(r)[n]\iso\canoide_R[n]$ is a normalized dualizing complex, by the local duality theorem in the derived category \cite[Thoerem $6.2$]{RaD} we have that 
$\fude R\Gamma_\mm(J^\bullet))\iso \fude R\Hom^\bullet(J^\bullet,R(r)[n])^\vee$
in $\fff D{}R$. Hence, using again the graded local duality theorem, for each $i\neq d$ we have graded isomorphisms of $R$-modules:
\begin{eqnarray*}  
\H_\mm^i(M)^\vee&\iso&H^{n-i}(J^\bullet)\\
&\iso&H^{n-i}(\fude R\Gamma_\mm(J^\bullet))\\
&\iso&H^{n-i}( \fude R\Hom^\bullet(J^\bullet,R(r)[n])^\vee)\\
&\iso&H^{i-n}(\fude R\Hom^\bullet(J^\bullet,R(r))[n])^\vee
\\
&\iso&\Ext^i_R(J^\bullet,R(r))^\vee.
\end{eqnarray*}
Since $\fude R\Hom^\bullet(-,R(r))$ is a $\delta$-functor and $R(r)$ is a dualizing complex, 
\[\xymatrix@=0.4cm{
\fude R\Hom^\bullet(J^\bullet,R(r))\ar[r] &M\ar[r] &\fude R\Hom^\bullet(\canoide_M,R(r))[n-d]\ar[r] &\fude R\Hom^\bullet(J^\bullet,R(r))[1]
}\]
is also an exact triangle in ${\bf D}^b(R)$, and it yields
the exact sequence of graded $R$-modules
\[0\la\Ext^0_R(J^\bullet,R(r))\la M\la\Ext^{n-d}_R(\canoide_M,R(r))\la\Ext^1_R(J^\bullet,R(r))\la 0\]
and the following graded isomorphisms for each $i>0$:
\[\Ext^{n-d+i}_R(\canoide_M,R(r))\iso\Ext^{i+1}_R(J^\bullet,R(r)).\]
Since $\Ext^i_R(J^\bullet,R(r))^\vee\iso\H_\mm^i(M)^\vee$ for all $i\neq d$, we have for each $i>0$ with $i\neq d-1$:
\[\Ext^{n-d+i}_R(\canoide_M,R(r))^\vee\iso\Ext^{i+1}_R(J^\bullet,R(r))^\vee\iso\H_\mm^{i+1}(M)^\vee.\]
Therefore, using again the graded local duality theorem, we have graded isomorphisms of $R$-modules for each $0<i<d-1$
\[\H_\mm^{d-i}(M)^\vee\iso\H_\mm^{i+1}(\canoide_M),\]
and, if $d\geq 2$, we have that
\[0\la \H_\mm^1(M)^\vee \la H_\mm^d(\canoide_M)\la M^\vee\la \H_\mm^0(M)^\vee\la 0\]
 is an exact sequence of graded $R$-modules.
 \end{proof}
 
A different proof of the above result can be found in 
\cite[Theorem 2.3 (iii) and (iv)]{FH}.

\smallskip

The following can be thought as a Poincar\'e duality in commutative algebra (later the connection will be more clear):

\begin{proposition}\label{cPoincare}
With our notation, if $R/\cc$ is generalized Cohen-Macaulay, then for each $i=1,\ldots ,d-2$, there are graded isomorphisms of $R$-modules 
\[\H^{d-i}_\mm(R/\cc)^\vee\iso\H_\mm^{i+1}(R/\cc)(c).\]
\end{proposition}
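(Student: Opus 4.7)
The plan is to derive this essentially as a direct corollary of Lemma \ref{gr:Schenzel} applied to the module $M = R/\cc$, combined with the quasi-Gorenstein hypothesis on $R/\cc$.

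First I would unpack what quasi-Gorenstein means in our setting: by the third assumption in our standing notation, we have the graded isomorphism
\[\canoide_{R/\cc} = \Ext^g_R(R/\cc, R(r)) \iso (R/\cc)(c).\]
Since the dimension of $R/\cc$ equals $d = n - g \geq 2$, and $R/\cc$ is assumed generalized Cohen-Macaulay, the module $M = R/\cc$ satisfies the hypotheses of Lemma \ref{gr:Schenzel}.

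Next I would apply Lemma \ref{gr:Schenzel} to obtain, for every $0 < i < d-1$, a graded isomorphism of $R$-modules
\[\H^{d-i}_\mm(R/\cc)^\vee \iso \H^{i+1}_\mm(\canoide_{R/\cc}).\]
Substituting the quasi-Gorenstein identification $\canoide_{R/\cc} \iso (R/\cc)(c)$ on the right hand side, and noting that local cohomology commutes with the shift functor, yields
\[\H^{d-i}_\mm(R/\cc)^\vee \iso \H^{i+1}_\mm\bigl((R/\cc)(c)\bigr) \iso \H^{i+1}_\mm(R/\cc)(c),\]
which is precisely the claimed identity for $i = 1, \ldots, d-2$.

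There is essentially no obstacle to this argument: the work has already been done in Lemma \ref{gr:Schenzel}, and the Poincar\'e duality flavor here comes from the self-duality of $R/\cc$ as its own (shifted) canonical module. The only point that requires a brief verification is that the range $1 \leq i \leq d-2$ in the statement matches exactly the range $0 < i < d-1$ provided by Lemma \ref{gr:Schenzel}, which it does. (One could also note that the two boundary cases $i = 0$ and $i = d-1$ fit into the short exact sequence at the end of Lemma \ref{gr:Schenzel}, but the proposition only asserts the isomorphisms in the interior range.)
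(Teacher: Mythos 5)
Your proof is correct and is exactly the argument the paper intends: the proposition is stated without proof precisely because it follows immediately from Lemma \ref{gr:Schenzel} applied to $M=R/\cc$ together with the quasi-Gorenstein identification $\canoide_{R/\cc}\iso (R/\cc)(c)$, and your range check $0<i<d-1$ versus $i=1,\ldots,d-2$ is the only verification needed.
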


Two consequences of the above proposition are:

\begin{corollary}\label{cweird}
With our notation, if $R/\cc$ is generalized Cohen-Macaulay and $c>0$, then there are two possibilities:
\begin{enumerate}
\item $R/\cc$ is Cohen-Macaulay, and so Gorenstein.
\item There exists $2\leq i<d$ such that $\H^i_\mm(R/\cc)_j\neq 0$ for some $j>0$.
\end{enumerate}
Analogously, if $c<0$, then there are two possibilities:
\begin{enumerate}
\item $R/\cc$ is Cohen-Macaulay, and so Gorenstein.
\item There exists $2\leq i<d$ such that $\H^i_\mm(R/\cc)_j\neq 0$ for some $j<0$.
\end{enumerate}
\end{corollary}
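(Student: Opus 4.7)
The idea is to combine Proposition \ref{cPoincare} with the fact that, being quasi-Gorenstein of dimension $d\geq 2$, the ring $R/\cc$ automatically satisfies Serre's condition $(S_2)$, so
\[\H^0_\mm(R/\cc)=\H^1_\mm(R/\cc)=0.\]
Therefore, if $R/\cc$ is not Cohen-Macaulay, the failure must be witnessed by some ``middle'' local cohomology: there exists an index $i$ with $2\leq i\leq d-1$ and $\H^i_\mm(R/\cc)\neq 0$. (The case $d=2$ forces $R/\cc$ to be Cohen-Macaulay directly, so one may harmlessly assume $d\geq 3$.)

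Next I would apply Proposition \ref{cPoincare}, but rewritten in the more symmetric form
\[\H^i_\mm(R/\cc)^\vee\iso \H^{d-i+1}_\mm(R/\cc)(c)\qquad\text{for } 2\leq i\leq d-1,\]
which follows from the stated isomorphism by setting $i\mapsto d-i$. The point is that both indices $i$ and $d-i+1$ lie in $\{2,\dots,d-1\}$, so the duality permutes precisely the range of local cohomology modules that can obstruct Cohen-Macaulayness.

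Now fix $i\in\{2,\dots,d-1\}$ and $j\in\Z$ with $\H^i_\mm(R/\cc)_j\neq 0$. Reading the isomorphism above in graded degree $-j$ and unwinding the definition of the Matlis dual and of the shift $(c)$, one gets
\[\H^{d-i+1}_\mm(R/\cc)_{c-j}\neq 0,\]
with $d-i+1\in\{2,\dots,d-1\}$. A small case analysis then finishes the argument: if $c>0$ and $j>0$, option (2) holds with $(i,j)$; if instead $c>0$ and $j\leq 0$, then $c-j\geq c>0$ and option (2) holds with $(d-i+1,\,c-j)$. The case $c<0$ is completely symmetric, replacing ``positive degree'' by ``negative degree'' throughout.

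The only genuine subtlety is the degree bookkeeping between Matlis duality and the grading shift, which must be handled carefully so that the sign of $c$ is what forces the witness of non-Cohen-Macaulayness into the desired half-line of degrees. Everything else is a direct consequence of Proposition \ref{cPoincare} and the built-in $(S_2)$-property of quasi-Gorenstein rings.
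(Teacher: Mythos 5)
Your proposal is correct and follows essentially the same route as the paper's own proof: use the $(S_2)$ property to locate a nonvanishing $\H^i_\mm(R/\cc)_j$ with $2\leq i<d$, then apply Proposition \ref{cPoincare} to trade $(i,j)$ for $(d-i+1,\,c-j)$ and observe that one of $j$, $c-j$ has the required sign. The degree bookkeeping you flag ($(M^\vee)_{-j}=\Hom_{R_0}(M_j,E)$ and $(M(c))_{-j}=M_{c-j}$) works out exactly as you describe.
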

\begin{proof}
The proof of the second part of the statement is analogous to the first one, so we focus on the proof of the first part, namely when $c>0$.
If $R/\cc$ is not Cohen-Macaulay, since it satisfies $(S_2)$ there exists $1\leq i\leq d-2$ and $j\in\Z$ such that $\H^{i+1}_\mm(R/\cc)_j\neq 0$. If $j>0$ we are done; otherwise by Proposition \ref{cPoincare} $(\H^{d-i}_\mm(R/\cc)^\vee)_{j-c}\cong (\H^{i+1}_\mm(R/\cc))_j\neq 0$, therefore $\H^{d-i}_\mm(R/\cc)_{c-j}\neq 0$ (and $c-j>0$).
\end{proof}

\begin{corollary}\label{qGorsing}
Let $A$ be a positively graded $k$-algebra, where $k$ is a field, such that $A(a)$ is a graded canonical module and $A$ is generalized Cohen-Macaulay but not Cohen-Macaulay.
If $k$ has positive characteristic and $A$ is $F$-injective, then $a=0$.
\end{corollary}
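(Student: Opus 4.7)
The strategy is proof by contradiction, using Corollary~\ref{cweird}. Suppose, for contradiction, that $a\neq 0$. By symmetry between the two halves of that corollary, it suffices to treat the case $a>0$, the case $a<0$ being identical after reversing signs throughout. First, realize $A$ as a quotient $A=R/\cc$ of a standard graded polynomial ring $R$ over $k$ (which is Gorenstein), so that the running hypotheses of Section~\ref{s:liaison} apply with $c=a$.

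The first half of Corollary~\ref{cweird} then applies: since $A$ is generalized Cohen-Macaulay but not Cohen-Macaulay and $a>0$, there exist $2\le i<d$ and $j>0$ with $\H^i_\mm(A)_j\neq 0$. Fix a nonzero homogeneous element $\eta$ of degree $j$ in $\H^i_\mm(A)$.

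Now the $F$-injectivity hypothesis enters. The Frobenius endomorphism $F\colon A\to A$, $x\mapsto x^p$, induces an additive self-map of $\H^i_\mm(A)$ which, because of the grading, sends the degree-$j$ component into the degree-$pj$ component. $F$-injectivity states exactly that this self-map is injective, hence so are all of its iterates, and $F^n(\eta)$ is a nonzero element of $\H^i_\mm(A)_{p^n j}$ for every $n\geq 1$.

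Finally, since $A$ is generalized Cohen-Macaulay and $i<d$, the module $\H^i_\mm(A)$ has finite length over $A$, and thus as a graded module lives in only finitely many graded degrees. This is incompatible with the existence of nonzero components at the unbounded set of degrees $\{p^n j:n\ge 1\}$, and forces $a=0$. The one nontrivial point is the degree behaviour of Frobenius on local cohomology, namely that it scales graded degrees by $p$; this is standard, and is the main thing to verify carefully in writing the proof.
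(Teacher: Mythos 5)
Your proof is correct, and it takes a mildly but genuinely different route from the paper's. Both arguments hinge on Corollary \ref{cweird}, but they diverge in two places. First, for $a>0$ the paper does not use Corollary \ref{cweird} at all: it observes that $\H^d_\mm(A)_a\neq 0$ (because $A(a)$ is the canonical module) and quotes \cite[Proposition 2.4(a)]{HoRo}, which forces the top local cohomology of an $F$-injective positively graded algebra to vanish in positive degrees, hence $a\leq 0$; only the case $a<0$ goes through Corollary \ref{cweird}, combined with \cite[Proposition 2.4(b)]{HoRo} (for $F$-injective generalized Cohen-Macaulay algebras, $\H^i_\mm(A)$ with $i<d$ is concentrated in degree $0$). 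You instead treat both signs uniformly via Corollary \ref{cweird} and then re-derive the needed half of \cite[Proposition 2.4(b)]{HoRo} from scratch, using that Frobenius scales graded degrees by $p$ on $\H^i_\mm(A)$ and that this module has finite length; this is the standard proof of that proposition, and your symmetry between the cases $j>0$ and $j<0$ is genuine, since either way the degrees $p^nj$ form an infinite set of distinct degrees in which the module would be nonzero. Your version is self-contained (nothing beyond the definition of $F$-injectivity is imported) and uniform in the sign of $a$; the paper's version is shorter for $a>0$. One cosmetic point: since $A$ is only assumed positively graded, the presentation $A=R/\cc$ should use a polynomial ring whose variables carry the (possibly non-unit) degrees of the algebra generators rather than the standard grading; nothing in your argument is affected.
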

\begin{proof}
We see $A$ as a quotient of a polynomial ring $R=k[x_1,\dots ,x_n]$ modulo a homogeneous ideal and put $d=\dim(A)$.

Since $\H^{d}_\mm(A)_a\neq 0$, we have $a\leq 0$ because $A$ is $F$-injective by \cite[Proposition 2.4(a)]{HoRo}. If $a<0$, then by Corollary \ref{cweird} there exists $2\leq i<d$ such that $\H^i_\mm(A)_j\neq 0$ for some $j\not=0$, and this is a contradiction to the fact that $A$ is $F$-injective and generalized Cohen-Macaulay by \cite[Proposition 2.4(b)]{HoRo}.
\end{proof}


If $R/\aa$ is generalized Cohen-Macaulay, combining Lemma\til\ref{exactseq} and Lemma \ref{gr:Schenzel} we obtain the following:

\begin{theorem}\label{main}
With our notation, if $R/\aa$ is generalized Cohen-Macaulay, there are
 exact sequences of graded $R$-modules:
\[0\la \H_\mm^1(R/\aa)^\vee\la H_\mm^d(\canoide_{R/\aa})\la(R/\aa)^\vee\la 0,\]
and 
\begin{eqnarray*}
0\la \H^{1}_\mm(R/\bb)(c)\la\H_\mm^{d-1}(R/\aa)^\vee\la\cdots
\H_\mm^{3}(R/\aa)^\vee\la \\
\H^{d-2}_\mm(R/\cc)(c)\la
\H^{d-2}_\mm(R/\bb)(c)\la 
\H_\mm^{2}(R/\aa)^\vee\la \\
\H^{d-1}_\mm(R/\cc)(c)\la
\H^{d-1}_\mm(R/\bb)(c)\la \H_\mm^1(R/\aa)^\vee\la 0
\end{eqnarray*}
\end{theorem}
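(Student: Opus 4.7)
The plan is to derive Theorem \ref{main} by splicing together Lemma \ref{exactseq} and Lemma \ref{gr:Schenzel}, both applied to the generalized Cohen--Macaulay module $M=R/\aa$. The hypothesis that $R/\aa$ be generalized Cohen--Macaulay is precisely what Lemma \ref{gr:Schenzel} needs to convert $\H^i_\mm(\canoide_{R/\aa})$ into Matlis duals of $\H^j_\mm(R/\aa)$, while Lemma \ref{exactseq} already records $\H^i_\mm(\canoide_{R/\aa})$ in a long exact sequence together with $\H^i_\mm(R/\cc)$ and $\H^i_\mm(R/\bb)$.

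First I would specialize Lemma \ref{gr:Schenzel} to $M=R/\aa$. Unmixedness of $\aa$ combined with $\dim R/\aa=d\ge 2$ gives $\mm\notin\mathrm{Ass}(R/\aa)$, so $\H^0_\mm(R/\aa)=0$. The four-term exact sequence of Lemma \ref{gr:Schenzel} therefore collapses to
\[0\la \H^1_\mm(R/\aa)^\vee\la \H^d_\mm(\canoide_{R/\aa})\la (R/\aa)^\vee\la 0,\]
which is exactly the first displayed sequence of the theorem. The same lemma yields graded isomorphisms $\H^{i+1}_\mm(\canoide_{R/\aa})\iso \H^{d-i}_\mm(R/\aa)^\vee$ for $1\le i\le d-2$; these will supply the Matlis-dual terms appearing in the second displayed sequence.

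Next I would feed these isomorphisms into the long exact sequence of Lemma \ref{exactseq}. To pin down its left end, I would argue that $R/\cc$ satisfies $(S_2)$ with $d\ge 2$, hence $\depth(R/\cc)\ge 2$ and $\H^0_\mm(R/\cc)=\H^1_\mm(R/\cc)=0$; unmixedness of $\bb$ gives $\H^0_\mm(R/\bb)=0$. Chasing the low-degree part of the long exact sequence associated to $0\to\canoide_{R/\aa}\to R/\cc(c)\to R/\bb(c)\to 0$ then forces $\H^0_\mm(\canoide_{R/\aa})=\H^1_\mm(\canoide_{R/\aa})=0$ and produces an injection $0\to \H^1_\mm(R/\bb)(c)\to \H^2_\mm(\canoide_{R/\aa})$. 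Substituting $\H^{d-i}_\mm(R/\aa)^\vee$ for $\H^{i+1}_\mm(\canoide_{R/\aa})$ in every intermediate term transcribes the long exact sequence into the middle portion of the desired second sequence.

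Finally, for the tail I would use the first displayed sequence to identify the kernel of the terminal map $\H^d_\mm(\canoide_{R/\aa})\to (R/\aa)^\vee$ of Lemma \ref{exactseq} with $\H^1_\mm(R/\aa)^\vee$. Since the image of the incoming arrow $\H^{d-1}_\mm(R/\bb)(c)\to \H^d_\mm(\canoide_{R/\aa})$ is exactly this kernel, the last steps of Lemma \ref{exactseq} reassemble into the surjection $\H^{d-1}_\mm(R/\bb)(c)\twoheadrightarrow \H^1_\mm(R/\aa)^\vee\to 0$ that closes the second sequence. I do not anticipate any serious obstacle beyond the low-degree bookkeeping of the second step, that is, verifying the vanishings needed to trim the second sequence on the left so that it starts at $0\to \H^1_\mm(R/\bb)(c)$; everything else is a direct splicing of the two preceding lemmas.
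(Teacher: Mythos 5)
Your proposal is correct and follows essentially the same route as the paper: both derive the first sequence from Lemma \ref{gr:Schenzel} applied to $M=R/\aa$ (using $\H^0_\mm(R/\aa)=0$ from unmixedness), and both obtain the second by substituting the isomorphisms $\H^{i+1}_\mm(\canoide_{R/\aa})\iso\H^{d-i}_\mm(R/\aa)^\vee$ into the long exact sequence of Lemma \ref{exactseq}, trimming the left end via $\H^1_\mm(R/\cc)=0$ from $(S_2)$ and closing the right end by identifying the kernel of $\H^d_\mm(\canoide_{R/\aa})\to(R/\aa)^\vee$ with $\H^1_\mm(R/\aa)^\vee$. The only difference is that you spell out the low-degree bookkeeping slightly more explicitly than the paper does.
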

\begin{proof}
Recall we are assuming that $d\geq 2$. In this case, because $\aa$ is unmixed of height $g=n-d<n$, we have $H^0_\mm(R/\aa)=0$. So Lemma \ref{gr:Schenzel} gives us the short exact sequence of graded $R$-modules:
\[0\la \H_\mm^1(R/\aa)^\vee\la H_\mm^d(\canoide_{R/\aa})\la(R/\aa)^\vee\la 0,\]
This also says that the kernel of the surjective map $H_\mm^d(\canoide_{R/\aa})\la(R/\aa)^\vee$ is $\H_\mm^1(R/\aa)^\vee$ and this fact, together with Lemmas \ref{exactseq} and \ref{gr:Schenzel}, gives us the second desired exact sequence of graded $R$-modules:
\begin{eqnarray*}
0\la \H^{1}_\mm(R/\bb)(c)\la\H_\mm^{d-1}(R/\aa)^\vee\la\cdots
\H_\mm^{3}(R/\aa)^\vee\la \\
\H^{d-2}_\mm(R/\cc)(c)\la
\H^{d-2}_\mm(R/\bb)(c)\la 
\H_\mm^{2}(R/\aa)^\vee\la \\
\H^{d-1}_\mm(R/\cc)(c)\la
\H^{d-1}_\mm(R/\bb)(c)\la \H_\mm^1(R/\aa)^\vee\la 0
\end{eqnarray*}
(The $0$ on the left hand side of the above exact sequence follows because $R/\cc$ satisfies Serre's condition $(S_2)$, so $\H^1_{\mm}(R/\cc)=0$).
\end{proof}

\begin{remark}
We record two special cases of the above result.
\begin{itemize}
\item[(i)] If $R/\aa$ is  Cohen-Macaulay, then Theorem \ref{main} tells us that we have graded isomorphisms:
\[H_\mm^{i}(R/\cc)\iso\H_\mm^{i}(R/\bb) \ \ \forall \ i<d.\]
\item[(ii)] If $R/\cc$ is a Gorenstein ring and $R/\aa$ is generalized Cohen-Macaulay, then Theorem \ref{main} tells us that we have graded isomorphisms:
\[\H_\mm^{i}(R/\bb)(c)\iso\H_\mm^{d-i}(R/\aa)^\vee \ \ \forall \ 0<i<d.\]
This recovers Corollaries 3.3 and 5.3 in \cite{schenzel} (notice that in Corollary 5.3 of \cite{schenzel} there is a mistake in the indexes).
\end{itemize}
\end{remark}

\section{Connectedness properties}\label{sec:conn}

In this section we show some applications to some graphs detecting connectedness properties of $\Spec(R/\cc)$.

\begin{definition}
For every $t\in\{0,\ldots ,d\}$, $\Gamma_t(R/\cc)$ denotes the simple graph with vertices the minimal prime ideals of $R/\cc$ and edges $\{\pp,\qq\}$ with $\altezza(\pp+\qq)\leq t$.
\end{definition}

For simplicity, we just write $\Gamma_t$ for $\Gamma_t(R/\cc)$.

\begin{remark}
The graphs $\Gamma_t$ form a sequence of graphs on the vertex set $\Min(\cc)$ such that:
\begin{itemize}
\item[(i)] $\Gamma_0\subseteq \Gamma_1\subseteq \ldots\subseteq \Gamma_d$.
\item[(ii)] $\Gamma_0$ has only isolated vertices.
\item[(iii)] $\Gamma_d$ it the complete graph.
\item[(iv)] When $R_0$ is a field, $\Gamma_1$ is the {\it dual graph} of the projective variety $\proj(R/\cc)$.
\item[(v)] If $\sharp$ stands for the number of connected components, and if $X=\Spec(R/\cc)$, then
\[\sharp \Gamma_t=\max\{\sharp (X\setminus Z):Z\mbox{ is a closed subset of $X$ with }\codim_XZ> t\}.\]
\end{itemize}
Besides $\Gamma_1$, since $X$ is necessarily connected, the graph $\Gamma_{d-1}$ is of particular interest, because in view of (v) it detects the number of connected components of the {\it punctured spectrum} $\Spec(R/\cc)\setminus \{\mm\}$.
\end{remark}

We recall that for a simple graph $G$ on a vertex set $V$, given a subset $U\subseteq V$ the induced graph $G\restriction_U$ is the simple graph on the vertex set $U$ with as edges $\{i,j\}\subseteq U$ such that $\{i,j\}$ is an edge of $G$.

\begin{theorem}\label{t:connectedness}
Assume that $\cc$ is radical and $R/\pp$ is Cohen-Macaulay for any $\pp\in\Min(\cc)$. For any subset of vertices $B\subset \Min(\cc)$ such that the induced graph $\Gamma_2\restriction_B$ consists of isolated vertices, the removal of the vertices in $B$ from $\Gamma_1$ does not disconnect it.
\end{theorem}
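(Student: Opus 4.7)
The plan is to recast the combinatorial statement in the liaison setting of the section. Set
\[\aa = \bigcap_{\pp \in B}\pp, \qquad \bb = \bigcap_{\pp \in \Min(\cc)\setminus B}\pp.\]
Because $\cc$ is radical and the primes of $\Min(\cc)$ are pairwise incomparable, one has $\cc = \aa \cap \bb$, and a standard colon computation yields $\aa = \cc :_R \bb$ and $\bb = \cc :_R \aa$, so that $(\aa,\bb,\cc)$ fits the standing liaison hypotheses. The graph $\Gamma_1$ with the vertices of $B$ removed is exactly $\Gamma_1(R/\bb)$, so the theorem asks that $\Gamma_1(R/\bb)$ be connected.

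The heart of the argument is to show that $R/\aa$ satisfies Serre's condition $(S_2)$. I would verify this locally: fix a prime $\pp_0 \supseteq \aa$ and set $B_{\pp_0} = \{\pp \in B : \pp \subseteq \pp_0\}$. If $|B_{\pp_0}| \leq 1$, then $(R/\aa)_{\pp_0}$ either vanishes or equals $(R/\pp)_{\pp_0}$ for the unique $\pp \in B_{\pp_0}$, hence is Cohen--Macaulay by hypothesis. If $|B_{\pp_0}| \geq 2$, the $\Gamma_2$-independence of $B$ forces $\altezza(\pp+\pp') \geq 3$ for any two distinct $\pp, \pp' \in B_{\pp_0}$, so $\altezza(\pp_0) \geq 3$; an induction on $|B_{\pp_0}|$ based on the Mayer--Vietoris sequence
\[0 \to R/\aa \to R/\aa' \oplus R/\pp \to R/(\aa'+\pp) \to 0,\]
with $\aa' = \bigcap_{\pp' \in B_{\pp_0}\setminus\{\pp\}}\pp'$ and $\dim R/(\aa'+\pp) \leq n-3$ coming from $\Gamma_2$-independence, together with the Cohen--Macaulayness of $R/\pp$ and the inductive $(S_2)$-bound on $R/\aa'$, delivers $\depth (R/\aa)_{\pp_0} \geq \mini(2,\dim (R/\aa)_{\pp_0})$ through the depth lemma.

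Once $R/\aa$ is $(S_2)$, Corollary \ref{serre} applied with $\ell = 2$ gives that $\H^{d-1}_\mm(R/\cc) \to \H^{d-1}_\mm(R/\bb)$ is surjective. To conclude, I argue by contradiction: if $\Gamma_1(R/\bb)$ decomposed as $C_1 \sqcup C_2$ with no $\Gamma_1$-edge between $C_1$ and $C_2$, the ideals $\bb_i = \bigcap_{\pp \in C_i}\pp$ would satisfy $\bb = \bb_1 \cap \bb_2$ and $\altezza(\bb_1+\bb_2) \geq 2$. Plugging this into the Mayer--Vietoris sequence, passing to local cohomology, and pulling back the resulting nontrivial splitting of (a quotient of) $\H^{d-1}_\mm(R/\bb)$ through the surjection from the previous step would produce a splitting of a piece of $\H^{d-1}_\mm(R/\cc)$, which via local duality and the quasi-Gorenstein identification $\canoide_{R/\cc} \cong R/\cc(c)$ contradicts the connectedness in codimension one of $\Spec(R/\cc)$ (a classical Hartshorne-type consequence of $R/\cc$ being reduced, $(S_2)$, and having Cohen--Macaulay minimal primes).

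The main obstacle is the $(S_2)$-verification for $R/\aa$: although the bound $\dim R/(\aa'+\pp) \leq n-3$ supplied by $\Gamma_2$-independence is immediate, it is only a dimension upper bound, and extracting the \emph{depth} lower bound required for the Mayer--Vietoris depth lemma to produce $\depth \geq 2$ (and not merely $\geq 1$) at a prime $\pp_0$ where several components of $V(\aa)$ meet requires careful bookkeeping that combines the codimension-$3$ transversality with the Cohen--Macaulayness of each $R/\pp$ and the inductive hypothesis.
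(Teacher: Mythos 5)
Your setup swaps the roles of $\aa$ and $\bb$ relative to what the argument needs, and this turns the key step into a false statement. With your naming ($\aa=\cap_{\pp\in B}\pp$, $\bb=\cap_{\pp\in\Min(\cc)\setminus B}\pp$), the ring whose $(S_2)$ property you must establish is $R/\bb$: Hartshorne's connectedness theorem converts $(S_2)$ of $R/\bb$ into connectedness in codimension one of $\Spec(R/\bb)$, which is exactly the connectedness of $\Gamma_1$ after removing $B$. Instead you set out to prove that $R/\aa$ is $(S_2)$, and that claim is simply false whenever $|B|\geq 2$: two distinct $\pp,\pp'\in B$ have $\altezza(\pp+\pp')\geq 3$, so $V(\pp+\pp')$ has codimension $\geq 3$ in $\Spec(R/\aa)$ and its removal disconnects $\Spec(R/\aa)$; by Hartshorne this rules out $(S_2)$. (The model case is two $3$-planes in $\mathbb{A}^6$ meeting at the origin.) This is why your Mayer--Vietoris depth count stalls at $\depth\geq 1$: the ``careful bookkeeping'' you defer to in the last paragraph cannot exist. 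Moreover, even if you had the surjectivity of $\H^{d-1}_\mm(R/\cc)\to\H^{d-1}_\mm(R/\bb)$, Corollary \ref{serre} tells you this is equivalent to $(S_2)$ of $R/\aa$ --- not of $R/\bb$ --- and the concluding contradiction argument (splitting a quotient of $\H^{d-1}_\mm(R/\bb)$ and pulling it back) is too vague to replace the missing application of Hartshorne to $R/\bb$.

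The fix is to prove a much weaker statement about $R/\aa$, namely the single vanishing $\H^{d-1}_\mm(R/\aa)=0$, rather than $(S_2)$. This does follow from your ingredients: induction on $|B|$ via the Mayer--Vietoris sequence $0\to R/\aa\to R/\aa'\oplus R/\pp\to R/(\aa'+\pp)\to 0$, using that $\H^{d-1}_\mm(R/\pp)=0$ since $R/\pp$ is Cohen--Macaulay of dimension $d$, and that $\dim R/(\aa'+\pp)<d-2$ by the $\Gamma_2$-independence of $B$, so $\H^{d-2}_\mm(R/(\aa'+\pp))=0$. Then the map $\H^{d-1}_\mm(R/\cc)\to\H^{d-1}_\mm(R/\aa)=0$ is trivially surjective, and the symmetric form of Corollary \ref{serre} (with the roles of the linked ideals interchanged) gives that $R/\bb$ satisfies $(S_2)$; Hartshorne's theorem then yields the connectedness of $\Gamma_1(R/\bb)$. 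This is the paper's argument, and your proposal already contains all of its pieces --- they just need to be aimed at the right ring and at the right (co)homological statement.
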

\begin{proof}
Set $\bb=\cap_{\pp\in B}\pp$. We claim that $H_{\mm}^{d-1}(R/\bb)=0$: to check this, let us argue inductively writing $B=B'\cup\{\pp\}$ with $\pp\notin B'$, and $\bb'=\cap_{\pp\in B'}\pp$. Consider the short exact sequence
\[0\to R/\bb\to R/\bb'\oplus R/\pp\to R/(\bb'+\pp)\to 0.\]
Taking the long exact sequence of local cohomology we get:
\[\ldots \to H^{d-2}_{\mm}(R/(\bb'+\pp))\to H^{d-1}_{\mm}(R/\bb)\to H^{d-1}_{\mm}(R/\bb')\oplus H^{d-1}_{\mm}(R/\pp)\to \ldots\]
We have $H^{d-1}_{\mm}(R/\pp)=0$ by assumption and $H^{d-1}_{\mm}(R/\bb')=0$ by induction. Furthermore $\altezza(\bb'+\pp)>g+2$ because the sum of any two prime ideals in $B$ has height $>g+2$ by assumption, so $\dim R/(\bb'+\pp)<d-2$: hence $H^{d-2}_{\mm}(R/(\bb'+\pp))=0$, and consequently $H^{d-1}_{\mm}(R/\bb)=0$.

Therefore, if $\aa=\cap_{\pp\in \Min(\cc)\setminus B}\pp$, $\aa$ and $\bb$ are linked via $\cc$ because $\cc$, being a radical ideal, is equal to $\aa\cap \bb$: so $R/\aa$ satisfies Serre's condition $(S_2)$ by Corollary \ref{serre}. Then $\Spec(R/\aa)$ is connected in codimension 1 by \cite[Corollary 2.3]{Har}, namely $\Gamma_1(R/\aa)$, that is $\Gamma_1$ after removing the subset of vertices $B$, is connected.
\end{proof}

Of course any singleton $B$ satisfies the hypothesis of Theorem \ref{t:connectedness} so, recalling that a graph is 2-connected if the removal of any single vertex does not disconnect it, we get:

\begin{corollary}
If $\cc$ is radical and $R/\pp$ is Cohen-Macaulay for any $\pp\in\Min(\cc)$, then $\Gamma_1$ is 2-connected.
\end{corollary}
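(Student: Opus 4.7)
The corollary is essentially an immediate specialization of Theorem \ref{t:connectedness}, together with the observation that $\Gamma_1$ is connected to begin with. My plan is to verify these two ingredients separately and combine them: a graph is 2-connected precisely when it is connected and has no cut vertex.

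First, for connectedness of $\Gamma_1$, I would invoke Hartshorne's connectedness theorem (\cite[Corollary 2.3]{Har}), which states that a ring satisfying Serre's condition $(S_2)$ has spectrum connected in codimension 1; this is exactly the statement that $\Gamma_1(R/\cc)$ is connected. Since $R/\cc$ is quasi-Gorenstein, it automatically satisfies $(S_2)$ (as noted in the Notation block), so the hypothesis is in place.

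For the absence of cut vertices, I would apply Theorem \ref{t:connectedness} to every singleton $B = \{\pp\}$ with $\pp \in \Min(\cc)$. When $B$ consists of a single vertex, the induced subgraph $\Gamma_2\restriction_B$ has no edges whatsoever (a single vertex is trivially isolated), so the hypothesis of the theorem is vacuously satisfied. The conclusion then says that removing $\pp$ from $\Gamma_1$ leaves a connected graph. Since $\pp$ was arbitrary, $\Gamma_1$ has no cut vertex. Combined with the connectedness established above, this gives 2-connectedness.

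There is no genuine obstacle here: the real work has already been absorbed into Theorem \ref{t:connectedness} (which in turn relies on Corollary \ref{serre} and Hartshorne's theorem). The only thing to be careful about is the degenerate case in which $\Min(\cc)$ has very few elements — if $|\Min(\cc)| = 1$ then $\Gamma_1$ is a single vertex and 2-connectedness is vacuous or conventionally considered to hold, and if $|\Min(\cc)| = 2$ then removing one vertex leaves a single vertex, still connected — so the argument goes through uniformly.
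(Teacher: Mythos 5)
Your proof is correct and follows essentially the same route as the paper, which obtains the corollary by applying Theorem \ref{t:connectedness} to each singleton $B=\{\pp\}$ (for which the hypothesis on $\Gamma_2\restriction_B$ is vacuous). Your additional check that $\Gamma_1$ is connected to begin with, via $(S_2)$ for $R/\cc$ and Hartshorne's connectedness theorem, is a reasonable explicit supplement to the paper's one-line derivation but not a different argument.
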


\begin{remark}
In the proof of Theorem\til\ref{t:connectedness} we just needed $H_{\mm}^{d-1}(R/\pp)=0$ for all $p\in\Min(\cc)$ so, both in Theorem \ref{t:connectedness} and in the Corollary below, we can replace the hypothesis that $R/\pp$ is Cohen-Macaulay for any $\pp\in\Min(\cc)$ by the weaker one that $H_{\mm}^{d-1}(R/\pp)=0$ for any $\pp\in\Min(\cc)$.
\end{remark}

Another interesting consequence on the connectedness properties of $R/\cc$ is the following:

\begin{theorem}\label{t:conn}
Assume that $R_0$ is a field, that $R$ is standard graded, and that $\cc$ defines a subspace arrangement (i.e. $\cc$ is radical and $\pp=(\pp\cap R_1)$ for all $\pp\in\Min(\cc)$). Then, as soon as $\proj(R/\aa)$ is Cohen-Macaulay (equivalently, $R/\aa$ is generalized Cohen-Macaulay), $\proj(R/\bb)$ is connected whenever $|\Min(\aa)|<d+c$.
\end{theorem}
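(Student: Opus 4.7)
The plan is to translate the connectedness of $\proj(R/\bb)$ into the vanishing of a single graded piece of local cohomology, and then extract that vanishing from Theorem \ref{main} combined with the Derksen--Sidman regularity bound for subspace arrangements.

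First I would observe that since $\cc$ is radical one has $\aa\cap\bb=\cc$ and $\Min(\cc)=\Min(\aa)\sqcup\Min(\bb)$: indeed $\Min(\aa),\Min(\bb)\subseteq\Min(\cc)$ by unmixedness, while any common minimal prime would force a contradiction after localization using $\bb=\cc:_R\aa$. In particular both $\aa$ and $\bb$ are themselves radical subspace arrangements, so each irreducible component of $\proj(R/\bb)$ is a linear $\P^{m-1}_k$ with $\Gamma(-,\mathcal O)=k$. Combined with reducedness and $H^0_\mm(R/\bb)=0$ (as $\bb$ is unmixed of positive dimension), the standard exact sequence
\[
0\longrightarrow (R/\bb)_0\longrightarrow \Gamma(\proj(R/\bb),\mathcal O_{\proj(R/\bb)})\longrightarrow H^1_\mm(R/\bb)_0\longrightarrow 0
\]
identifies $\dim_k\Gamma(\proj(R/\bb),\mathcal O)$ with the number of connected components, so $\proj(R/\bb)$ is connected if and only if $H^1_\mm(R/\bb)_0=0$.

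Next I would read off from Theorem \ref{main} the leftmost injection
\[
0\longrightarrow H^1_\mm(R/\bb)(c)\longrightarrow H^{d-1}_\mm(R/\aa)^\vee,
\]
and evaluate it in graded degree $-c$ to obtain the inclusion $H^1_\mm(R/\bb)_0\hookrightarrow H^{d-1}_\mm(R/\aa)_c^\vee$. Hence it suffices to prove $H^{d-1}_\mm(R/\aa)_c=0$.

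For this final vanishing I would invoke the Derksen--Sidman theorem, which bounds $\reg(R/\aa)\le |\Min(\aa)|-1$ for any subspace arrangement. Since $\reg(R/\aa)\ge\max\{j:H^{d-1}_\mm(R/\aa)_j\neq 0\}+(d-1)$, this yields $H^{d-1}_\mm(R/\aa)_j=0$ for every $j>|\Min(\aa)|-d$, and the hypothesis $|\Min(\aa)|<d+c$ is exactly the assertion that $c$ lies in this vanishing range. The main obstacle is having the Derksen--Sidman estimate at hand; everything else is a careful but formal threading of Theorem \ref{main} through the sheaf-cohomology criterion for connectedness. It is rewarding that the shift $c$ coming from the quasi-Gorenstein hypothesis on $\cc$ appears in the numerical assumption in exactly the right way to land inside the vanishing range supplied by Derksen--Sidman, and this is precisely what permits the genuine generalization of \cite{BV} beyond the Gorenstein case.
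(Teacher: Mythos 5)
Your proposal is correct and follows essentially the same route as the paper: both reduce connectedness of $\proj(R/\bb)$ to the vanishing of $H^1_\mm(R/\bb)_0$, obtain the injection $H^1_\mm(R/\bb)(c)\hookrightarrow H^{d-1}_\mm(R/\aa)^\vee$ from Theorem \ref{main}, and kill the target in degree $-c$ via the Derksen--Sidman bound $\reg(R/\aa)\le |\Min(\aa)|-1<d+c-1$. Your extra justification of the connectedness criterion via the sheaf-cohomology sequence is a detail the paper leaves implicit, but the argument is the same.
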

\begin{proof}
Notice that since $\cc$ is radical, then both $\aa$ and $\bb$ are radical as well. So calling $A=\Min(\aa)\subset \Min(\cc)$ and $B=\Min(\bb)=\Min(\cc)\setminus \Min(\aa)$, we have $\aa=\cap_{\pp\in A}\pp$ and $\bb=\cap_{\pp\in B}\pp$. In particular $\aa$ defines a subspace arrangement, and its Castelnuovo-Mumford regularity is at most $|A|$ by \cite[Theorem 2.1]{DS}. Equivalently the Castelnuovo-Mumford regularity of $R/\aa$ is at most $|A|-1<d+c-1$: in particular $H^{d-1}_{\mm}(R/\aa)_c=0$, that is $[H^{d-1}_{\mm}(R/\aa)^\vee]_{-c}=0$. Theorem \ref{main} tells us that $H^{1}_{\mm}(R/\bb)(c)_{-c}=H^{1}_{\mm}(R/\bb)_0=0$, yielding the connectedness of $\proj(R/\bb)$.
\end{proof}

\begin{remark}
Note that, under the assumptions that Theorem \ref{t:conn}, the fact that $\proj(R/\bb)$ is connected is equivalent to the fact that the graph $\Gamma_{d-1}\restriction_B$ is connected.

Moreover, if $d=2$ the condition that $R/\aa$ is generalized Cohen-Macaulay is vacuous, so Theorem \ref{t:conn} implies that the graph $\Gamma_{d-1}=\Gamma_1$ is $(c+2)$-connected when $d=2$: this recovers the main result of \cite{BV}. 
\end{remark}

\section{Applications to triangulated homology manifolds}\label{s:Lefschetz}

Let $R=k[X_1,\ldots ,X_n]$ be a polynomial ring in $n$ variables over a field $k$ equipped with the standard grading $\deg(X_i)=1$ for any $i=1,\ldots ,n$. Notice that $R$ is a graded $n$-dimensional Gorenstein ring with the unique maximal homogeneous ideal $\mm=(X_1,\ldots ,X_n)$, and that the $a$-invariant of $R$ is $-n$. So the theory developed so far applies in this case.

\vspace{2mm}

Recall that, given a simplicial complex $\Delta$ on $n$ vertices, the {\it Stanley-Reisner ideal} $I_{\Delta}\subseteq R$ is the ideal generated by the monomials $X_{i_1}X_{i_2}\cdots X_{i_s}$ such that $\{i_1,\ldots ,i_s\}$ is not a face of $\Delta$. This association yields a 1 to 1 correspondence between simplicial complexes on $n$ vertices and squarefree monomial ideals of $R$, and $k[\Delta]=R/I_{\Delta}$ is called the {\it Stanley-Reisner} ring of $\Delta$. Let us remind that, given a face $\sigma$ of a simplicial complex $\Delta$, by definition the dimension of $\sigma$ is $\dim\sigma=|\sigma | -1$ and the dimension of $\Delta$ is $\dim\Delta=\max\{\dim\sigma:\sigma\in\Delta\}$. We say that $\sigma$ is an {\it $i$-face} if $\dim\sigma=i$.
Given a face $\sigma\in\Delta$, its {\it link} is the simplicial complex defined as $\lk_{\Delta}\sigma=\{\tau\in\Delta:\tau\cup\sigma\in\Delta \mbox{ and }\tau\cap\sigma=\emptyset\}$.
A face of $\Delta$ maximal by inclusion is called a {\it facet}, and $\Delta$ is a pure $d$-dimensional simplicial complex if $\dim\sigma=d$ for any facet $\sigma\in\Delta$.

We say that a pure $d$-dimensional simplicial complex $\Delta$ is a {\it normal pseudomanifold} if the following two conditions hold:
\begin{enumerate}
\item $\lk_{\Delta}(\sigma)$ is connected for any $i$-face $\sigma$ with $i\leq d-2$.
\item Any $(d-1)$-face is contained in exactly two $d$-faces.
\end{enumerate}

\begin{remark}
Gr\"abe \cite{Grabe} studied the notion of {\it quasi $d$-manifold}, that differs from that of $d$-dimensional normal pseudomanifold only because (1) is required for {\it nonempty} $i$-faces with $i\leq d-2$ and in (2) the word ``exactly" is replaced by ``at most". 
\end{remark}

\begin{remark}
Condition (1) above can be defined for any simplicial complex, and is usually referred as {\it normality}. It is not difficult to check that a simplicial complex $\Delta$ is normal if and only if $k[\Delta]$ satisfies Serre's condition $(S_2)$ (independently on the field $k$). This, somehow, resembles Serre's criterion for normality: a Noetherian ring $A$ is normal if and only if $A$ satisfies $(R_1)$ and $(S_2)$ (notice that a Stanley-Reisner ring $k[\Delta]$ almost never satisfies $(R_1)$).

Usually, in the definition of pseudomanifold condition (1) is replaced by the weaker one that $\Delta$ is strongly connected (namely, one can walk along the facets through codimension one faces). While the links of a pseudomanifold are not necessarily pseudomanifolds, the links of a normal pseudomanifold are themselves normal pseudomanifolds.  
\end{remark}

\begin{remark}\label{r:orientable}
It is not difficult to check that a $d$-dimensional (normal) pseudomanifold $\Delta$ is orientable if and only if $H_d(\Delta;\Z)\neq 0$, that is furthermore equivalent to the fact that $H_d(\Delta;\Z)\cong \Z$. Indeed, the summation over the facets $\sigma\in \Delta$ of $m\sgn(\sigma)\sigma$ gives a top-dimensional (and so nontrivial) cycle for all $m\in\Z$. On the other hand, given a top dimensional cycle $\sum_{\sigma}a_{\sigma}\sigma$ with $a_{\sigma}\in\Z$, using condition (2) in the definition of normal pseudomanifolds, and since $\Delta$ is strongly connected, we must have $a_{\sigma}\neq 0$ for any facet $\sigma$. Moreover, since $\Delta$ is strongly connected and using condition (2) in the definition of normal pseudomanifolds, there must exist a natural number $m\in\N$ such that $a_{\sigma}=\pm m$ for every facet $\sigma$. It is therefore enough to define accordingly $\sgn(\sigma)$.

Notice that, if $k$ has characteristic 2 and $\Delta$ is a $d$-dimensional (normal) pseudomanifold, then $H_d(\Delta;k)\neq 0$, since $\sum_{\sigma}\sigma$ is always a top-dimensional cycle (where the summation runs over all the facets). On the other hand, there are non-orientable normal pseudomanifolds, for example a triangulation of the real projective plane.
\end{remark}

\begin{theorem}\label{t:qGpseudo}
Given a simplicial complex  $\Delta$  on $n$ vertices, the following are equivalent:
\begin{enumerate}
    \item There is a graded isomorphism of $k[\Delta]$-modules $\omega_{k[\Delta]}\cong k[\Delta]$.
    \item $\Delta$ is a normal pseudomanifold and $H_{\dim\Delta}(\Delta;k)\neq 0$.
\end{enumerate}
\end{theorem}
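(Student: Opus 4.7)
Plan: I will prove the equivalence by passing to the $\Z^n$-graded (multigraded) category, using Hochster's formula and $\Z^n$-graded local duality to identify both $k[\Delta]$ and $\omega_{k[\Delta]}$ in each multidegree. Write $d=\dim\Delta$ and $D=d+1=\dim k[\Delta]$. Since $\omega_{k[\Delta]}$ is the graded Matlis dual of $H^D_\mm(k[\Delta])$, it is $\Z^n$-graded and supported in $\Z^n_{\geq 0}$, so $(\omega_{k[\Delta]})_0=(\omega_{k[\Delta]})_{\mathbf 0}$. Consequently a $\Z$-graded isomorphism $\omega_{k[\Delta]}\cong k[\Delta]$ amounts to a choice of nonzero $\xi\in(\omega_{k[\Delta]})_{\mathbf 0}$ such that $\varphi\colon k[\Delta]\to\omega_{k[\Delta]}$, $1\mapsto\xi$, is bijective; and any such $\varphi$ is automatically $\Z^n$-graded. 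By Hochster's formula and multigraded local duality, for $\sigma\in\Delta$ and $a\in\Z^n_{\geq 0}$ with $\mathrm{supp}(a)=\sigma$ one has $\dim_k(\omega_{k[\Delta]})_a=\dim_k \wt H_{D-1-|\sigma|}(\lk_\Delta\sigma;k)$, while $\dim_k(k[\Delta])_a=1$; both vanish when $\mathrm{supp}(a)\notin\Delta$.

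For $(1)\Rightarrow(2)$: quasi-Gorensteinness gives Serre's condition $(S_2)$, equivalently normality of $\Delta$ by the Remark preceding the theorem, and unmixedness gives purity of dimension $d$. Specializing the formula above to $\sigma=\emptyset$ yields $\dim \wt H_d(\Delta;k)=1$, so in particular $H_d(\Delta;k)\neq 0$. Specializing to a $(d-1)$-face $\sigma$, whose link is a $0$-dimensional complex on some $m$ vertices, gives $m-1=\dim\wt H_0(\lk_\Delta\sigma;k)=1$, whence $\sigma$ lies in exactly two facets; this is the pseudomanifold condition.

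For $(2)\Rightarrow(1)$: Given (2), I first establish the key lemma that for every face $\sigma\in\Delta$ the link $\lk_\Delta\sigma$ is a normal pseudomanifold and $\wt H_{d-|\sigma|}(\lk_\Delta\sigma;k)\cong k$. The former is standard; the latter I prove by induction on $|\sigma|$ via the star/link decomposition: the fundamental class of $\Delta$, which exists by Remark \ref{r:orientable}, restricts (up to sign) to a cycle whose image in $\lk_\Delta\sigma$ is nontrivial and in fact generates its top homology. Combined with the displayed identity, this gives $\dim(\omega_{k[\Delta]})_a=\dim(k[\Delta])_a$ for every $a\in\Z^n$, both being at most $1$-dimensional. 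Therefore, for any nonzero $\xi\in(\omega_{k[\Delta]})_{\mathbf 0}$, the map $\varphi$ is an isomorphism iff $X^a\cdot\xi\neq 0$ for every $a$ with $\mathrm{supp}(a)\in\Delta$; under the standard identification (Gr\"abe/Hochster) of multiplication by $X^a$ on $\omega_{k[\Delta]}$ with the dual of the chain-level map $\wt C_*(\lk_\Delta\mathrm{supp}(a);k)\to\wt C_{*+|\mathrm{supp}(a)|}(\Delta;k)$ given by joining with $\mathrm{supp}(a)$, this nonvanishing corresponds exactly to the nontriviality of the top class of $\lk_\Delta\mathrm{supp}(a)$ established in the lemma.

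The main obstacle is the lemma on $k$-orientability of links of $k$-orientable normal pseudomanifolds together with the identification of the monomial action on $\omega_{k[\Delta]}$ with the combinatorial join map. Both are known in spirit but require careful bookkeeping of orientations and signs, especially when iterated over codimension chains, and this is where the bulk of the technical work lies.
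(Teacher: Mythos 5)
Your strategy is sound and, in one direction, genuinely different from (and cleaner than) the paper's. For $(1)\Rightarrow(2)$ the paper localizes at a $(d-1)$-face to reduce to the fact that a one-dimensional quasi-Gorenstein ring is Gorenstein, and then invokes Gr\"abe's lemma to rule out the ``one facet'' case; you instead compare $\Z^n$-graded Hilbert functions directly, using that any $\Z$-graded module isomorphism $k[\Delta]\to\omega_{k[\Delta]}$ is determined by the image of $1$, which lies in $(\omega_{k[\Delta]})_{\mathbf 0}$, and is therefore automatically $\Z^n$-graded. Reading off $\dim_k\widetilde{H}_d(\Delta;k)=1$ at $a=\mathbf 0$ and $\dim_k\widetilde{H}_0(\lk_\Delta\sigma;k)=1$ at the indicator vector of a $(d-1)$-face $\sigma$ gives both conditions in (2) at once. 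This is correct and self-contained modulo Hochster's formula.

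For $(2)\Rightarrow(1)$, however, your argument is essentially a plan to reprove the theorem of Gr\"abe that the paper simply cites, and the two ingredients you defer are precisely its content: (i) that the fundamental class of a $k$-orientable normal pseudomanifold restricts to a generator of $\widetilde{H}_{d-|\sigma|}(\lk_\Delta\sigma;k)$ for every face $\sigma$, and (ii) Gr\"abe's refinement of Hochster's formula identifying multiplication by $X^a$ on $H^{d+1}_\mm(k[\Delta])$ (equivalently on $\omega_{k[\Delta]}$) with the (duals of the) simplicial restriction maps between links. You are right that equality of multigraded Hilbert functions alone proves nothing here and that the module structure must be controlled --- this is the correct obstacle to identify --- but as written the hard direction rests entirely on ``known in spirit'' assertions. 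If (i) and (ii) are taken as citable (they are both in Gr\"abe's paper, which is exactly what the authors cite), your proof closes; as a self-contained argument it has a real hole at that point, since the sign and orientation bookkeeping for iterated links is where all the work is.
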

\begin{proof}
(2) $\implies$ (1) follows by the theorem at page 169 of the paper of Gr\"abe \cite{Grabe}:  indeed a normal pseudomanifold is a quasi-manifold. Moreover it is easy to see that the condition $H_{\dim\Delta}(\Delta;k)\neq 0$ implies, using the notations of \cite{Grabe}, $\mathrm{Bd}(\Delta)=\emptyset$ (see \cite[page 167, before Beispiele 3.3]{Grabe}). So, still using the notation in Gr\"abe's paper, $J(\Delta)=k[\Delta]$ and $H_{\dim\Delta}(\Delta,\mathrm{Bd}(\Delta);k)=H_{\dim\Delta}(\Delta;k)\neq 0$, hence we get the desired statement.

(1) $\implies$ (2): Since $k[\Delta]\cong \omega_{k[\Delta]}$, $k[\Delta]$ satisfies the $(S_2)$ condition, and so $\Delta$ must be normal. Furthermore, let $\dim\Delta=d$, and take a $(d-1)$-face $\sigma$. Then $\lk_{\Delta}\sigma$ is a $0$-dimensional complex consisting of $r$ vertices, where $r$ is the number of facets containing $\sigma$. Since the quasi-Gorenstein property localizes, and since $k[\Delta]_{X_{\sigma}}\cong k[\lk_{\Delta}\sigma][X_i,X_i^{-1}:i\in\sigma]$ (where $X_{\sigma}=\prod_{i\in\sigma}X_i$), it turns out that $k[\lk_{\Delta}\sigma]$ is a 1-dimensional quasi-Gorenstein (and so Gorenstein) ring. Since $\reg k[\lk_{\Delta}\sigma]\leq \dim k[\lk_{\Delta}\sigma]=1$, either $r=1$ or $r=2$, since the only standard graded Gorenstein algebras of Castelnuovo-Mumford regularity 1 are hypersurfaces; so $\Delta$ is a quasi-manifold in the sense of Gr\"abe. We want to prove that $r=1$ is not possible: notice that the graded isomorphism $\omega_{k[\Delta]}\cong k[\Delta]$ implies that $H_d(\Delta;k)^\vee=H^{d+1}_{\mm}(k[\Delta])_0=k\neq 0$. Since $H_d(\Delta;k)\neq 0$, we have $\widetilde{H_0}(\lk_{\Delta}\sigma;k)\neq 0$ by \cite[page 167, before Beispiele 3.3]{Grabe} and the lemma at page 162 of \cite{Grabe}. So $r=2$ is the only possibility, and we conclude the proof since we proved along the way that $H_d(\Delta;k)\neq 0$.
\end{proof}

We recall that a simplicial complex $\Delta$ is {\it Buchsbaum}  over $k$ if, for all $\sigma\neq \emptyset$, $\tilde{H}_i(\lk_{\Delta}\sigma;k)=0$ for all $i<\dim \lk_{\Delta}\sigma$. This condition is equivalent to the fact that $k[\Delta]$ is generalized Cohen-Macaulay. Furthermore, $\Delta$ is a {\it homology manifold} over $k$ if, besides being Buchsbaum, $H_{\dim \lk_{\Delta}\sigma}(\lk_{\Delta}\sigma;k)\cong k$ for any nonempty face $\sigma\in\Delta$. It is a {\it homology sphere} over $k$ if, besides being a homology manifold, $\widetilde{H_j}(\Delta;k)=0$ if $j<\dim\Delta$ and $H_{\dim \Delta}(\Delta;k)\cong k$.

\begin{corollary}\label{c:manifold}
Let $\Delta$ be a connected Buchsbaum simplicial complex over the field $k$. Then the following are equivalent:
\begin{enumerate}
    \item $k[\Delta]$ is quasi-Gorenstein but not Gorenstein;
    \item $\Delta$ is a homology manifold, but not a homology sphere, over $k$ and $H_{\dim\Delta}(\Delta;k)\neq 0$.
    \end{enumerate}
\end{corollary}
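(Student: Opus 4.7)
The plan is to prove the two directions separately; (2) $\implies$ (1) is short, while (1) $\implies$ (2) requires some work. For (2) $\implies$ (1), a $k$-homology manifold $\Delta$ with $H_{\dim\Delta}(\Delta;k)\neq 0$ is a normal pseudomanifold (codim-$1$ links are $0$-spheres, giving the two-facet condition; higher-codim links are positive-dimensional spheres, hence connected), so Theorem \ref{t:qGpseudo} yields $\omega_{k[\Delta]}\cong k[\Delta]$ and thus quasi-Gorensteinness. It is not Gorenstein because the non-vanishing of $H_{\dim\Delta}(\Delta;k)$ rules out cone points, and Stanley's characterization of Gorenstein Stanley--Reisner rings would then force $\Delta$ to be a homology sphere, contradicting the hypothesis.

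For (1) $\implies$ (2), write $d=\dim k[\Delta]$ and $\omega_{k[\Delta]}\cong k[\Delta](a)$; since quasi-Gorenstein in dimension $\leq 2$ is Gorenstein, we have $d\geq 3$. The crucial step is to show $a=0$. By Hochster's formula and the Buchsbaum hypothesis (each non-empty link is Cohen--Macaulay, so its reduced cohomology is concentrated in top degree), the modules $H^i_\mm(k[\Delta])$ are concentrated in degree $0$ for every $i<d$. Proposition \ref{cPoincare} then supplies graded isomorphisms $H^{d-i}_\mm(k[\Delta])^\vee\cong H^{i+1}_\mm(k[\Delta])(a)$ for $1\leq i\leq d-2$: the left side lives in degree $0$, the right in degree $-a$, so either $a=0$ or both sides vanish throughout that range. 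In the vanishing case $H^j_\mm(k[\Delta])=0$ for $2\leq j\leq d-1$; the short exact sequence of Lemma \ref{gr:Schenzel} applied to $M=k[\Delta]$, combined with $H^d_\mm(\omega_{k[\Delta]})\cong H^d_\mm(k[\Delta])(a)\cong k[\Delta]^\vee$ via graded local duality, becomes
\[0\to H^1_\mm(k[\Delta])^\vee\to k[\Delta]^\vee\to k[\Delta]^\vee\to 0.\]
A graded self-surjection of $k[\Delta]^\vee$ is an isomorphism by Hilbert-series counting piece by piece, so $H^1_\mm(k[\Delta])=0$; combined with the vanishing of the middle cohomologies this makes $k[\Delta]$ Cohen--Macaulay, hence Gorenstein, contradicting our assumption. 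Thus $a=0$, and Theorem \ref{t:qGpseudo} delivers that $\Delta$ is a normal pseudomanifold with $H_{\dim\Delta}(\Delta;k)\neq 0$.

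To upgrade ``normal pseudomanifold'' to ``homology manifold'', fix a vertex $v$ and localize: $k[\Delta]_{X_v}\cong k[\lk_\Delta v][X_v^{\pm 1}]$ inherits quasi-Gorensteinness from $k[\Delta]$, and $k[\lk v]$ is Cohen--Macaulay by Buchsbaumness, hence Gorenstein. Because $\Delta$ is a normal pseudomanifold of dimension $\geq 2$, each vertex link $\lk v$ is itself a normal pseudomanifold of positive dimension, and such a complex is never a cone; thus $\lk v$ has no cone points and, by Stanley's theorem, is a homology $(d-2)$-sphere. All vertex links being homology spheres means $\Delta$ is a homology manifold, and it is not a homology sphere since $k[\Delta]$ is not Gorenstein.

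The main obstacle is pinning down $a=0$: Proposition \ref{cPoincare} constrains $a$ only when at least one middle local cohomology is nonzero, and the degenerate case where they all vanish must be excluded separately using the short exact sequence of Lemma \ref{gr:Schenzel} and local duality to bootstrap $H^1_\mm(k[\Delta])=0$ into a Cohen--Macaulay contradiction.
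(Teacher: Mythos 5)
Your proof is correct and follows the paper's overall strategy: both directions pivot on Theorem \ref{t:qGpseudo}, and the heart of (1) $\Rightarrow$ (2) is forcing the $a$-invariant to vanish using the fact that Buchsbaumness concentrates $\H^i_{\mm}(k[\Delta])$ in degree $0$ for $i<d$. Two local steps differ. For $a=0$, the paper combines the standard fact that a Stanley--Reisner ring has non-positive $a$-invariant with Corollary \ref{cweird}; you instead apply Proposition \ref{cPoincare} directly and dispose of the degenerate case (all middle local cohomologies vanish) via the exact sequence of Lemma \ref{gr:Schenzel}. Your route has the small advantage of not needing $a\le 0$, but the degenerate case is handled more heavily than necessary: a quasi-Gorenstein ring satisfies $(S_2)$, so $\H^0_{\mm}(k[\Delta])=\H^1_{\mm}(k[\Delta])=0$ already for $d\ge 2$, and Cohen--Macaulayness (hence Gorensteinness, hence a contradiction) follows at once --- this is exactly how Corollary \ref{cweird} packages the case analysis. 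For the upgrade from normal pseudomanifold to homology manifold, the paper simply cites Gr\"abe, while you give a self-contained argument: localizing at a vertex $v$ shows $k[\lk_{\Delta}v]$ is quasi-Gorenstein, Buchsbaumness makes it Cohen--Macaulay, hence Gorenstein, and since a positive-dimensional normal pseudomanifold is never a cone, Stanley's criterion forces every vertex link to be a homology sphere. This is a nice alternative that stays within the algebraic toolkit of Section \ref{s:liaison}; the only point to make explicit is that a homology manifold requires all face links (not just vertex links) to be homology spheres, which follows because links of faces inside a homology sphere are again homology spheres.
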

\begin{proof}
(2) $\implies$ (1) follows from Theorem \ref{t:qGpseudo}: indeed a connected homology manifold $\Delta$ is in particular a normal pseudomanifold, and $H_{\dim\Delta}(\Delta;k)$ is (dual to) the degree 0 part of $\omega_{k[\Delta]}$, that is $k$ by Theorem \ref{t:qGpseudo} because $H_{\dim\Delta}(\Delta;k)\neq 0$. Since $\Delta$ is not a homology sphere, therefore $H_i(\Delta;k)\neq 0$ for some $i<\dim\Delta$, so $k[\Delta]$ is not Cohen-Macaulay. 

For (1) $\implies$ (2), notice that the $a$-invariant of a Stanley-Reisner ring cannot be positive. If the $a$-invariant of $k[\Delta]$ were negative, since $k[\Delta]$ is generalized Cohen-Macaulay, by Corollary \ref{cweird} there would exist $2\leq i<d$ such that $\H^i_\mm(k[\Delta])_j\neq 0$ for some $j<0$. This is impossible for a Buchsbaum simplicial complex, so we must have that $k[\Delta]$ is a quasi-Gorenstein ring of $a$-invariant 0. Namely, there is a graded isomorphism of $k[\Delta]$-modules $\omega_{k[\Delta]}\cong k[\Delta]$. So $\Delta$ is a normal pseudomanifold and $H_{\dim\Delta}(\Delta;k)\neq 0$ by Theorem \ref{t:qGpseudo}. Since a Buchsbaum normal pseudomanifold is a connected homology manifold (e.g. see \cite[Beispiele 3.3 (2)]{Grabe}, we conclude.
\end{proof}

\begin{remark}
If $\Delta$ has dimension 2 and $k$ has characteristic 0, Corollary \ref{c:manifold} tells that $k[\Delta]$ is quasi-Gorenstein but not Gorenstein if and only if $\Delta$ is an orientable manifold but not a sphere. Indeed since $k[\Delta]$ satisfies $(S_2)$ and has Krull dimension 3, it is necessarily generalized Cohen-Macaulay, so we are allowed to use Corollary \ref{c:manifold}. Furthermore in dimension 2 homology manifolds (respectively homology spheres) coincide with manifolds (respectlvely spheres). Finally, since $k$ is flat as $\Z$-module, for any simplicial complex $\Gamma$ we have $H_i(\Gamma;k)\cong H_i(\Gamma;\Z)\otimes_{\Z}k$ for every integer $i$, hence $H_{\dim\Gamma}(\Gamma;\Z)\neq 0$ if and only if $H_{\dim\Gamma}(\Gamma;k)\neq 0$ since $H_{\dim\Gamma}(\Gamma;\Z)$ is torsionfree.
\end{remark}

\begin{theorem}\label{Lefschetz duality}
Let $\Delta$ be a $d$-dimensional normal pseudomanifold such that $\widetilde{H}^d(\Delta;k)\neq 0$ (e.g. if $\Delta$ is orientable) with facets $\sigma_1,\ldots ,\sigma_l$. Let $A,B$ be a partition of $\{1,\ldots ,l\}$, $\Delta_A$ be the simplicial complex with facets $\sigma_i$ where $i\in A$ and $\Delta_B$ be the simplicial complex with facets $\sigma_i$ where $i\in B$. If $\Delta_A$ is Buchsbaum, then there is a long exact sequence
\begin{eqnarray*}
0\to \widetilde{H}^0(\Delta_B;k)\to \widetilde{H}_{d-1}(\Delta_A;k)\to \widetilde{H}^1(\Delta;k)\to \widetilde{H}^1(\Delta_B;k)\to 
\widetilde{H}_{d-2}(\Delta_A;k)\to \\
\widetilde{H}^2(\Delta;k) \to \cdots \to \widetilde{H}^{d-2}(\Delta_B;k) \to \widetilde{H}_{1}(\Delta_A;k)\to \widetilde{H}^{d-1}(\Delta;k)\to \widetilde{H}^{d-1}(\Delta_B;k)\\
\to\widetilde{H}_0(\Delta_A;k)\to 0.
\end{eqnarray*}
Moreover $\widetilde{H}^{i}(\lk_{\Delta_B}\sigma;k)\cong \widetilde{H}^{i}(\lk_{\Delta}\sigma;k)$ for all $\sigma\neq \emptyset$ and $i<d$. In particular $\widetilde{H}^{i}(\lk_{\Delta}\sigma;k)=0$ for any $\sigma\notin \Delta_B$ and $i<d$.
\end{theorem}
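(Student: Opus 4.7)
My plan is to translate the statement into commutative algebra through the Stanley-Reisner correspondence and then invoke Theorem \ref{main}. Set $\cc = I_\Delta$, $\aa = I_{\Delta_A}$, and $\bb = I_{\Delta_B}$ inside the polynomial ring $R = k[X_1, \ldots, X_n]$. The first task is to verify the standing hypotheses of Section \ref{s:liaison}. The assumption $\widetilde{H}^d(\Delta; k) \neq 0$ is equivalent to $\widetilde{H}_d(\Delta; k) \neq 0$ over a field, so by Theorem \ref{t:qGpseudo} there is a graded isomorphism $\omega_{k[\Delta]} \cong k[\Delta]$; in the notation of Section \ref{s:liaison} this means $R/\cc$ is quasi-Gorenstein with shift $c = 0$. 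Because $A$ and $B$ partition the facets of the pure $d$-dimensional complex $\Delta$, the ideals $\aa$ and $\bb$ are unmixed of the same height $n - d - 1$ as $\cc$, and the linkage identities $\cc :_R \aa = \bb$, $\cc :_R \bb = \aa$ follow from the primary decomposition of the radical ideal $\cc$ into primes of equal height. Finally, $\Delta_A$ being Buchsbaum over $k$ is, by the remark preceding Corollary \ref{c:manifold}, equivalent to $k[\Delta_A] = R/\aa$ being generalized Cohen-Macaulay.

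Once these hypotheses are in place, Theorem \ref{main} produces a long exact sequence of $\Z$-graded $R$-modules involving the local cohomology of $R/\aa$, $R/\cc$, and $R/\bb$. I would extract its degree-zero component. By Hochster's formula, for any simplicial complex $\Gamma$ on at most $n$ vertices one has $H^j_\mm(k[\Gamma])_0 \cong \widetilde{H}^{j-1}(\Gamma; k)$; since $R_0 = k$ is a field, the graded Matlis dual of the paper specialises in degree zero to the ordinary $k$-linear dual, and hence $(H^j_\mm(k[\Delta_A])^\vee)_0 \cong \widetilde{H}_{j-1}(\Delta_A; k)$. With $c = 0$ and $\dim k[\Delta] = d + 1$, substituting these identifications into the exact sequence of Theorem \ref{main} produces exactly the claimed long exact sequence.

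For the \emph{Moreover} assertion, I would exploit that $\aa$, $\bb$, $\cc$ being monomial ideals forces Theorem \ref{main} to hold with respect to the finer $\Z^n$-grading: every Ext, local cohomology, Hom and Matlis dual appearing in the proofs of Lemmas \ref{exactseq} and \ref{gr:Schenzel} respects the multigrading. Fix $\sigma \in \Delta$ with $\sigma \neq \emptyset$ and take the component in multidegree $-\mathbf{1}_\sigma$. By Hochster, $H^j_\mm(k[\Delta_A])_a$ vanishes unless $a$ lies in the nonpositive orthant; therefore $(H^j_\mm(k[\Delta_A])^\vee)_{-\mathbf{1}_\sigma} = H^j_\mm(k[\Delta_A])_{\mathbf{1}_\sigma}^* = 0$. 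The sequence then collapses to isomorphisms $H^j_\mm(k[\Delta])_{-\mathbf{1}_\sigma} \cong H^j_\mm(k[\Delta_B])_{-\mathbf{1}_\sigma}$ in the relevant range of $j$, and applying Hochster's formula once more converts these into the identifications $\widetilde{H}^i(\lk_{\Delta_B}\sigma;k) \cong \widetilde{H}^i(\lk_\Delta \sigma;k)$. When $\sigma \notin \Delta_B$ the right-hand side vanishes identically, yielding the \emph{In particular} clause.

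The hard part will be confirming the multigraded enhancement of Theorem \ref{main}: one must re-examine each quasi-isomorphism and natural transformation used in Section \ref{s:liaison} to check that it is $\Z^n$-graded when the ideals are monomial. A secondary subtlety is the bookkeeping of the top term $H^d_\mm(\omega_{R/\aa})$: its component in multidegree $-\mathbf{1}_\sigma$ is pinned down by the short exact sequence of Lemma \ref{gr:Schenzel} and controlled by $(R/\aa)^\vee_{-\mathbf{1}_\sigma}$, and unwinding this piece carefully is what extends the range of vanishing and isomorphism all the way to $i < d$.
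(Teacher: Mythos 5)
Your route is the same as the paper's: check the linkage hypotheses (the decomposition $\cc=\aa\cap\bb$ of the radical ideal $\cc$ into height-unmixed pieces gives $\cc:_R\aa=\bb$ and $\cc:_R\bb=\aa$; Theorem \ref{t:qGpseudo} gives the quasi-Gorenstein hypothesis with $c=0$; Buchsbaumness of $\Delta_A$ is generalized Cohen--Macaulayness of $R/\aa$), apply Theorem \ref{main} with $\dim(R/\cc)=d+1$, and read off $\Z^n$-graded components via Hochster's formula: degree $0$ for the long exact sequence, multidegree $-\mathbf{1}_\sigma$ for the links. This is verbatim the paper's (very short) proof, and you rightly flag the two points it leaves implicit, namely the multigraded enhancement of Theorem \ref{main} for monomial ideals and the handling of the top local cohomology term.

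The one real problem is your closing claim that bookkeeping $H^{d+1}_\mm(\canoide_{R/\aa})$ extends the isomorphism $\widetilde H^i(\lk_{\Delta_B}\sigma;k)\cong\widetilde H^i(\lk_\Delta\sigma;k)$ to all $i<d$. It does not, and cannot: the exact sequence of Theorem \ref{main} in multidegree $-\mathbf{1}_\sigma$ yields the isomorphism only for $i<\dim\lk_\Delta\sigma=d-|\sigma|$, while at $i=d-|\sigma|$ the continuation of the sequence from Lemma \ref{exactseq} gives $\dim_k\widetilde H^{\,d-|\sigma|}(\lk_\Delta\sigma;k)=\dim_k\widetilde H^{\,d-|\sigma|}(\lk_{\Delta_B}\sigma;k)+\dim_k\bigl((R/\aa)^\vee\bigr)_{-\mathbf{1}_\sigma}$, and the last summand equals $1$ whenever $\sigma\in\Delta_A$. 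Concretely, for $\Delta=\partial(\text{tetrahedron})$, $\Delta_B=\langle\{1,2,3\}\rangle$ and $\sigma=\{1,2\}$ one has $\widetilde H^0(\lk_\Delta\sigma;k)=k$ but $\widetilde H^0(\lk_{\Delta_B}\sigma;k)=0$, with $0<d=2$. So the range $i<d$ in the printed statement (and hence in the ``in particular'' clause, which fails for a $(d-1)$-face whose two facets both lie in $\Delta_A$) is a slip; what your argument --- and the paper's --- actually proves is the statement with $i<\dim\lk_\Delta\sigma$. (A cosmetic remark in the same spirit: the degree-$0$ specialization ends in $\widetilde H_0(\Delta_A;k)$, not $\widetilde H_1(\Delta_A;k)$; that too is a typo in the statement rather than a defect of your argument.)
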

\begin{proof}
Let $\cc=I_{\Delta}$, $\aa=I_{\Delta_A}$ and $\bb=I_{\Delta_B}$. Because these ideals are radical and $\cc=\aa\cap \bb$, we have $\aa:_R\cc=\bb$ and $\bb:_R\cc=\aa$. Moreover $\omega_{R/\cc}\cong R/\cc$ by Theorem \ref{t:qGpseudo}. Furthermore the assumption that $\Delta_A$ is Buchsbaum is equivalent to say that $R/\aa$ is generalized Cohen-Macaulay, hence we can apply Theorem \ref{main} with $c=0$. Now the existence of the long exact sequence follows by Hochster's formula (for example see \cite[Theorem 5.3.8]{CMrings}). Also the last part of the thesis follows in the same way, just noticing that the maps of the long exact sequence in Theorem \ref{main} preserve the $\Z^n$-degree.
\end{proof}

The above theorem is a version of the classical Lefschetz duality in algebraic topology (c.f. \cite[Theorem 70.2]{ETA}). The point is that, with the notation of Theorem \ref{Lefschetz duality}, we also have the classical long exact sequence of relative singular cohomology
\begin{eqnarray*}
0\to \widetilde{H}^0(\Delta_B;k)\to \widetilde{H}^1(\Delta,\Delta_B;k)\to \widetilde{H}^1(\Delta;k)\to \widetilde{H}^1(\Delta_B;k)\to 
\widetilde{H}^2(\Delta,\Delta_B;k)\to \\
\cdots \to \widetilde{H}^{d-2}(\Delta_B;k) \to \widetilde{H}^{d-1}(\Delta,\Delta_B;k)\to \widetilde{H}^{d-1}(\Delta;k)\to \widetilde{H}^{d-1}(\Delta_B;k).
\end{eqnarray*}
Therefore Theorem \ref{Lefschetz duality} gives commutative diagrams for all $i=1,\ldots ,d-1$:
\[\xymatrix@=0.7cm{
\widetilde{H}^{i-1}(\Delta;k)\ar[d]_=\ar[r]&\widetilde{H}^{i-1}(\Delta_B;k)\ar[d]_=\ar[r]&\widetilde{H}^{i}(\Delta,\Delta_B;k)\ar[r]&\widetilde{H}^{i}(\Delta;k)\ar[d]_=\ar[r]&\widetilde{H}^{i}(\Delta_B;k)\ar[d]_=\\
\widetilde{H}^{i-1}(\Delta;k)\ar[r]&\widetilde{H}^{i-1}(\Delta_B;k)\ar[r]&\widetilde{H}_{d-i}(\Delta_A;k)\ar[r]&\widetilde{H}^{i}(\Delta;k)\ar[r]&\widetilde{H}^{i}(\Delta_B;k)}.\]
Then there are also vertical maps $\widetilde{H}^{i}(\Delta,\Delta_B;k)\xrightarrow{f_i} \widetilde{H}_{d-i}(\Delta_A;k)$ making all squares commute, and by the five lemma $f_i$ has to be an isomorphism for all $i=1,\ldots ,d-2$. Since $\widetilde{H}_{d-i}(\Delta_A;k)\cong \widetilde{H}_{d-i}(|\Delta|\setminus|\Delta_B|;k)$ by the lemma below, we get the topological Lefschetz duality \cite[Theorem 70.2]{ETA} when the coefficient group is $k$.

\begin{lemma}\label{l:homotopy}
Let $\Delta$ be a pure $d$-dimensional simplicial complex with facets $\sigma_1,\ldots ,\sigma_k$ such that every $(d-1)$-face is contained in at most two facets and $\lk_{\Delta}(\sigma)$ is connected for any $\emptyset\neq \sigma\in \Delta$ of dimension $\leq d-2$ (e.g. if $\Delta$ is a $d$-dimensional normal pseudomanifold). Let $A,B$ be a partition of $\{1,\ldots ,k\}$, $\Delta_A$ be the simplicial complex with facets $\sigma_i$ where $i\in A$ and $\Delta_B$ be the simplicial complex with facets $\sigma_i$ where $i\in B$. If $\lk_{\Delta_A}(\sigma)$ is connected for any $\emptyset\neq \sigma\in \Delta_A$ of dimension $\leq d-2$ (e.g. if $\Delta_A$ is Buchsbaum), the topological spaces $|\Delta_A|$ and $|\Delta|\setminus|\Delta_B|$ are homotopically equivalent.
\end{lemma}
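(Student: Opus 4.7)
I will first show that $|\Delta|\setminus|\Delta_B|=|\Delta_A|\setminus|L|$, where $L=\Delta_A\cap\Delta_B$: every point $x\in|\Delta|$ is in the relative interior of a unique face $\mathrm{supp}(x)\in\Delta$, every face of $\Delta$ lies in some facet (hence in $\Delta_A\cup\Delta_B$), and so $x\notin|\Delta_B|$ iff $\mathrm{supp}(x)\in\Delta_A\setminus L$. This also shows $|\Delta|\setminus|\Delta_B|\subseteq|\Delta_A|$. It therefore remains to prove that the inclusion $|\Delta_A|\setminus|L|\hookrightarrow|\Delta_A|$ is a homotopy equivalence, which I plan to do by induction on $d=\dim\Delta$ using a poset-theoretic form of Quillen's Theorem A.

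Let $P$ denote the poset of nonempty faces of $\Delta_A$. A standard deformation retract on the first barycentric subdivision identifies $|\Delta_A|\setminus|L|$ with the order complex $\Delta(P\setminus L)$, while $|\Delta_A|$ itself is the order complex $\Delta(P)$; since $L$ is a subcomplex, $P\setminus L$ is an order filter in $P$, and by Quillen's Theorem A the inclusion is a homotopy equivalence provided that for each $\tau\in L$ the upper fibre $\{\rho\in\Delta_A:\rho\supsetneq\tau,\ \rho\notin L\}$ has contractible order complex. Under the bijection $\rho\mapsto\rho\setminus\tau$ this fibre corresponds to the nonempty faces of $\lk_{\Delta_A}(\tau)$ not lying in $\lk_L(\tau)$, so by the same deformation retract its order complex is homotopy equivalent to $|\lk_\Delta(\tau)|\setminus|\lk_{\Delta_B}(\tau)|$. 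The hypotheses of the lemma transfer to $\lk_\Delta(\tau)$ equipped with the induced partition of its facets (using purity, the ``at most two facets'' condition, and the link identities $\lk_{\lk_\Delta(\tau)}(\sigma)=\lk_\Delta(\tau\cup\sigma)$ and analogously for $\Delta_A$), so the inductive hypothesis, applied in the strictly smaller dimension $d-\dim\tau-1$, yields $|\lk_\Delta(\tau)|\setminus|\lk_{\Delta_B}(\tau)|\simeq|\lk_{\Delta_A}(\tau)|$.

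The Quillen reduction thus comes down to showing that $|\lk_{\Delta_A}(\tau)|$ is contractible for every $\tau\in L$. In the base case $\dim\tau=d-1$ this is immediate: the condition that each $(d-1)$-face of $\Delta$ lies in at most two facets, combined with $\tau\in L$, forces $\tau$ to be in exactly one facet of $\Delta_A$ and one of $\Delta_B$, so $\lk_{\Delta_A}(\tau)$ is a single vertex. For lower-dimensional $\tau$, the finer Buchsbaum-type structure of $\Delta_A$ (available in the intended application via Theorem~\ref{Lefschetz duality} and Corollary~\ref{c:manifold}, which makes $\lk_{\Delta_A}(\tau)$ behave like a pseudomanifold with boundary $\lk_L(\tau)$) allows a nested induction on $d-\dim\tau$ to produce the required contractibility. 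This last step is the main obstacle: mere connectivity of the links is not enough in general, and the argument must genuinely exploit the full Buchsbaum-type structure of $\Delta_A$. Once this contractibility is in place, the Quillen reduction combined with the inductive identification of links delivers the homotopy equivalence $|\Delta_A|\setminus|L|\simeq|\Delta_A|$, completing the proof.
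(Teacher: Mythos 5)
Your reduction is sound in outline and genuinely different from the paper's route: the paper first deformation retracts $|\Delta|\setminus|\Delta_B|$ onto the subcomplex $\Gamma\subseteq\Delta_A$ of faces disjoint from $\Delta_B$ (Munkres, Lemma 70.1) and then exhibits an explicit simplicial collapse of $\Delta_A$ onto $\Gamma$, removing the vertices of $\Delta_A\cap\Delta_B$ one at a time; you instead pass to the face poset and invoke Quillen's fiber criterion for the inclusion of the order filter $P\setminus L$. The identification $|\Delta|\setminus|\Delta_B|=|\Delta_A|\setminus|L|$, the barycentric retraction onto the order complex, and the inductive identification of the fiber over $\tau\in L$ with $|\lk_{\Delta_A}(\tau)|$ are all fine.

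However, the proof is incomplete exactly where you say it is, and that missing step is not a technicality: it is the entire content of the lemma. Your argument requires $|\lk_{\Delta_A}(\tau)|$ to be contractible for every $\tau\in L$, and you establish this only for $\dim\tau=d-1$, deferring the rest to ``the finer Buchsbaum-type structure'' without an argument. The stated hypotheses give only \emph{connectivity} of the links of $\Delta_A$, and contractibility does not follow formally from connectivity (already for $d=3$ and $\tau$ a vertex, $\lk_{\Delta_A}(\tau)$ is a pure strongly connected $2$-complex with every edge in at most two triangles, which a priori could have nontrivial topology). This is precisely the crux the paper has to work for: it shows that for $v\in\Delta_A\cap\Delta_B$ the link $\lk_{\Delta_A}v$ possesses a \emph{free} $(d-2)$-face --- obtained by walking in the strongly connected $\lk_{\Delta}v$ from a facet of $\Delta_A$ to a facet of $\Delta_B$ through $v$ and using that each $(d-1)$-face of $\Delta$ lies in at most two facets --- and then argues that a pure, strongly connected complex of this type with a free codimension-one face is collapsible. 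Some version of that free-face argument (or another proof of contractibility of $\lk_{\Delta_A}(\tau)$ for $\tau\in L$) must be supplied before your Quillen reduction closes; as written, the proposal reduces the lemma to an unproved statement of essentially the same difficulty.
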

\begin{proof}
Let $\Gamma$ be the subcomplex of $\Delta_A$ consisting of all faces disjoint from $\Delta_B$. Then $|\Gamma|$ is a deformation retract of $|\Delta|\setminus|\Delta_B|$ by \cite[Lemma 70.1]{ETA}. We will show that $\Delta_A$ collapses to $\Gamma$, and this will let us conclude.

To this purpose, consider a vertex $v\in\Delta_A\cap\Delta_B$. Then $\lk_{\Delta_A}v$ is a connected $(d-1)$-dimensional simplicial complex such that each $(d-2)$-face is contained in at most two $(d-1)$-faces. The assumption that $\lk_{\Delta_A}(\sigma)$ is connected for any $\emptyset\neq \sigma\in \Delta_A$ of dimension $\leq d-2$ actually ensures that $\lk_{\Delta_A}v$ is strongly connected, and in particular pure. For the same reason, $\lk_{\Delta}v$ is also strongly connected, so if $\sigma$ is a facet of $\Delta_A\subset \Delta$ containing $v$ and $\tau$ is a facet of $\Delta_B\subset \Delta$ containing $v$, it must be possible to walk from $\sigma$ to $\tau$ through $(d-1)$-faces of $\Delta$ containing $v$. Because each of these $(d-1)$-faces is contained in at most two facets of $\Delta$, there is one $(d-1)$-face $\alpha$ of $\Delta$ containing $v$ which is contained in only one facet of $\Delta_A$. In other words, $\alpha\setminus \{v\}$ is a free $(d-2)$-face of $\lk_{\Delta_A}v$, which therefore is a pure strongly connected $(d-1)$-dimensional simplicial complex such that each $(d-2)$-face is contained in at most two facets, having a free $(d-2)$-face. It is not difficult to show that such a simplicial complex has to be collapsible. One can easily choose to collapse to a vertex $v'\in\lk_{\Delta_A}v$ that is not a vertex of $\Delta_B$. The collapse happened in $\lk_{\Delta_A}v$, but can be done at the level of $\Delta_A$: indeed if $\beta$ is a face of $\lk_{\Delta_A}v$ contained only in one higher dimensional face $\gamma$ of $\lk_{\Delta_A}v$, then $\beta\cup \{v\}$ is a face of $\Delta_A$ contained only in one higher dimensional face $\gamma\cup \{v\}$ of $\Delta_A$ (here $\Delta_A$ should be thought as modifying while collapsing). 

At this point, $\Delta_A$ collapsed to the simplicial complex $(\Delta_A\setminus\{\sigma\cup \{v\}:\sigma\in\lk_\Delta v\})\cup \{\{v\},\{v,v'\}\}$, and we can  still collapse the pairs $(\{v\},\{v,v'\})$. If this new simplicial complex is equal to $\Gamma$ we finished, otherwise renaming it by $\Delta_A$ there exists a vertex $w\in\Delta_A\cap\Delta_B$ and we proceede as above. Since there are finitely many vertices in $\Delta_A\cap\Delta_B$, at a certain point the algorithm will stop.
\end{proof}

\begin{remark}
In the lemma above the assumptions are necessary, as shown by the examples (in the first $\Delta$ is not a manifold and in the second $\Delta_A$ is not Buchsbaum).
\begin{eqnarray*}
\Delta=\langle\{1,2,3\}, \{1,2,4\},\{1,2,5\}\rangle, \ \ \Delta_A=\langle\{1,2,3\}, \{1,2,4\}\rangle , \\
\Delta=\langle\{1,2,5\}, \{2,3,5\},\{3,4,5\},\{1,4,5\},\{1,2,3\}\rangle, \ \ \Delta_A=\langle\{1,2,5\},\{3,4,5\}\rangle .
\end{eqnarray*}
We already explained the connection between Theorem \ref{Lefschetz duality} and the topological Lefschetz duality, but we want also to quickly explain the similarities in the assumptions. It is not difficult to show that the concept of (triangulated) connected homology manifold without boundary is equivalent with the concept of Buchsbaum normal pseudomanifold (see also \cite[Beispiele 3.3 (2)]{Grabe}). This allows to show that the assumptions that $\Delta$ is a normal pseudomanifold and that $\Delta_A$ is Buchsbaum imply that $|\Delta|\setminus |\Delta_B|$ is a homology manifold without boundary (perhaps disconnected).
\end{remark}


\begin{thebibliography}{10}

\bibitem{BV} B. Benedetti, M. Varbaro, \emph{On the dual graphs of Cohen-Macaulay algebras}. Int. Math. Res. Not. IMRN 2015, no. 17, 8085-8115.


\bibitem{CMrings} W. Bruns; J. Herzog.
Cohen-Macaulay rings. Cambridge Studies in Advanced Mathematics, $\mathbf{39}$. \emph{Cambridge University Press, Cambridge,} 1993. xii+403 pp.

\bibitem{DMV} H. Dao, L. Ma, M. Varbaro,
Regularity, singularities and $h$-vectors of graded algebras. arXiv:1901.01116.

\bibitem{DS}
H. Derksen, J. Sidman, \emph{A sharp bound for the Castelnuovo-Mumford regularity of subspace arrangements}, Adv. Math. 172, pp. 151--157, 2002.

\bibitem{FH}
T. Fiel, R. Holanda, \emph{Bass and Betti Numbers of a Module and Its Deficiency Modules.} \url{arXiv:2112.09724}


\bibitem{Grabe}H-G. Gr\"abe, 
\"Uber den Stanley-Reisner-Ring von Quasimannigfaltigkeiten. \emph{Math. Nachr.} 117 (1984), 161-174. 

\bibitem{RaD} R. Hartshorne.
Residues and Duality. Lecture Notes in Mathematics, $\mathbf{20}$. \emph{Springer,} 1966. vii+423 pp. 

\bibitem{Har}
R.~Hartshorne, {\it Complete intersection and connectedness}, Amer. J. Math. 84, pp. 497-508, 1962.

\bibitem{HoRo} M. Hochster, J.L. Roberts,
The purity of the Frobenius and local cohomology. 
Advances in Math. 21 (1976), no. 2, 117-172. 

\bibitem{Mig}
J.C. Migliore, Introduction to liaison theory and deficiency modules. Progress in Mathematics, 165. Birkh\"auser Boston, Inc., Boston, MA, 1998.

\bibitem{ETA} J.R. Munkres.
Elements of Algebraic Topology. Addison-Wesley publishing company, Menlo Park, CA, 1984.  

\bibitem{MaxN}
M. Noether, Zur Grundlegung der Theorie der algebraischen Raumcurven. (German) J. Reine Angew. Math. 93 (1882), 271-318.

\bibitem{PS} 
C. Peskine, L. Szpiro, Liaison des vari\'et\'es alg\'ebriques. I. (French) Invent. Math. 26 (1974), 271-302. 

\bibitem{Rao} 
A.P. Rao, Liaison among curves in $\mathbb{P}^3$. Invent. Math. 50 (1978/79), no. 3, 205-217.

\bibitem{schenzel3} P. Schenzel.
Zur lokalen Kohomologie des kanonischen Moduls. 
Math. Z. 165 (1979), 223-230.

\bibitem{schenzel} P. Schenzel.
Notes on liaison and duality. \emph{J. Math. Kyoto Univ.} $\mathbf{22}$ (1982/83), no. 3, 485--498.

\bibitem{schenzel2} P. Schenzel. On the Use of Local Cohomology in Algebra and Geometry. Six lectures on commutative algebra, 241-292, Mod. Birkh\"auser Class., Birkh\"auser Verlag, Basel, 2010.


\end{thebibliography}
\end{document}